\definecolor{ccolor}{RGB}{203,96,21}
\newcommand{\R}{\mathbb{R}}
\newcommand{\N}{\mathbb{N}}
\newcommand{\Lie}{\mathcal{L}}
\newcommand{\A}{\mathcal{A}}
\newcommand{\cs}{\mathcal{C}}
\DeclarePairedDelimiter{\ceil}{\lceil}{\rceil}
\DeclarePairedDelimiterX{\inp}[2]{\langle}{\rangle}{#1, #2}
\DeclarePairedDelimiter{\Mp}{\mathcal{M}_+(}{)}
\newcommand{\old}[1]{{\color{black} #1}}
\newtheorem{thm}{Theorem}[section]
\newtheorem{lem}[thm]{Lemma}
\newtheorem{prop}[thm]{Proposition}
\newtheorem{prob}[thm]{Problem}
\newtheorem{rmk}{Remark} 
\newcommand{\bbmu}{\boldsymbol{\mu}}
\newcommand{\bell}{\boldsymbol{\ell}}
\newlength{\exfiglength}
\pgfplotsset{compat=1.10}
\definecolor{myMagenta}{rgb}{1,0,1}
\renewcommand\footnotemark{}
\title{\bf Peak Time-Windowed Risk Estimation \\ of Stochastic Processes
}
\author{Jared Miller$^1$,  Niklas Schmid$^1$, Matteo Tacchi$^2$, Didier Henrion$^3$,  Roy S. Smith$^1$
\thanks{$^1$J.\ Miller, N.\ Schmid, and R.S.\ Smith are with the Automatic Control Laboratory (IfA) and NCCR Automation, Department of Information Technology and Electrical Engineering (D-ITET), ETH Z\"{u}rich, Physikstrasse 3, 8092, Z\"{u}rich, Switzerland ( \{jarmiller, rsmith\}@control.ee.ethz.ch).}
\thanks{$^2$ M.\ Tacchi is with Univ. Grenoble Alpes, CNRS, Grenoble INP (Institute of Engineering Univ. Grenoble Alpes), GIPSA-lab, 38000 Grenoble, France. (e-mail: matteo.tacchi@gipsa-lab.fr)}
\thanks{$^3$ D.\ Henrion is with the Polynomial Optimization group of LAAS-CNRS, Universit\'e de Toulouse, CNRS, Toulouse, France; and the 
Faculty of Electrical Engineering, Czech Technical University in Prague, Czech Republic. (henrion@laas.fr)}
\thanks{J.\ Miller and R.S.\ Smith are  partially supported by the Swiss National Science Foundation under NCCR Automation, grant agreement 51NF40\_180545. 
}
}
\begin{document}

\maketitle
\thispagestyle{empty}
\pagestyle{empty}

\begin{abstract}
\label{sec:abstract}
This paper develops a method to upper-bound extreme-values of time-windowed risks for stochastic processes. Examples of such risks include the maximum average or 90\% quantile of the current along a transmission line in any 5-minute window. This work casts the time-windowed risk analysis problem as an infinite-dimensional linear program in occupation measures. In particular, we employ the coherent risk measures of the mean and the expected shortfall (conditional value at risk) to define the maximal time-windowed risk along trajectories. The infinite-dimensional linear program must then be truncated into finite-dimensional optimization problems, such as by using the moment-sum of squares hierarchy of semidefinite programs. The infinite-dimensional linear program will have the same optimal value as the original nonconvex risk estimation task under compactness and regularity assumptions, and the sequence of semidefinite programs will converge to the true value under additional properties of algebraic characterization. The scheme is demonstrated for risk analysis of example stochastic processes.



\end{abstract}
\section{Introduction}
\label{sec:introduction}

This work analyzes risks associated with state-function values of a stochastic process over a specified time window. A motivating example in power systems includes the maximum  current being generated by a synchronous machine \cite{machowski1997power}. When the synchronous machine starts to spin, an \textit{inrush current} of $\sim$5-6 times the rated current capacity of the machine can be generated. Such exceedance is tolerable, because the maximum instantaneous current only occurs for a short time range \cite{turner2010transformer}.  A time-windowed risk such as the mean, quantile, or \ac{ES} may give an operator a better picture of the safety of the power system, where the duration of this time window is related to the heat dissipation properties of the relevant components. 

The problem setting discussed in this paper will involve stochastic processes described by a generator $\Lie$ for trajectories evolving in a state domain $X \subset \R^n$ (the stochastic trajectory $x(t)$ `follows' the generator $\Lie$). In the context of power systems, such stochastic processes could originate from thermal noise or the intermittencies of demand and photovoltaic/wind generation.
Trajectories will evolve starting from an initial set $X_0 \subseteq X$, and will stop upon first contact with the boundary $\partial X$ (with an exit time distribution $\tau_X = \inf\{ t : X_t \in \partial X\}$) or upon reaching a finite time horizon of $T$. Given a risk measure $\mathcal{R}$ (e.g., mean, 90\%-quantile), a state function $p$,  and a time window $h \in [0, T]$, the time-windowed risk estimation problem is as follows:

\begin{prob}
    \label{prob:risk_window}
Find a stopping time $t^*$ and initial condition $x_0^*$ to supremize 
\begin{subequations}
\label{eq:risk_window}
\begin{align}
    P^*= & \sup_{t^*, \ x_0^*}  \quad \mathcal{R}\left(\frac{1}{h}\int_{t^*-h}^{t^*} p(x(t')) dt'\right) \label{eq:risk_window_obj} \\
     \textrm{s.t.} \quad & x(t) \ \text{follows} \  \Lie \quad  \forall t \in [0, \min(t^*, \tau_X)] 
\label{eq:risk_window_dynamics}\\
     & x(0) = x_0^* \\
     & t^* \in [h, T], \ x_0^* \in X_0 \label{eq:risk_window_time}
\end{align}
\end{subequations}
\end{prob}

The optimization variables of Problem \ref{prob:risk_window} are the stopping time $t^*$ and the initial condition $x_0^*$. The parenthesized quantity in \eqref{eq:risk_window_obj} is 
a random variable
formed by the 
evaluation
of $p$ along the stochastic trajectories $x(t)$, as averaged in the time window $[t^*-h, t^*]$. In the continuous-time case, this It\^{o}-integrated average acts as a normalized marginalization which collapses state-value data $p(x(t'))$ in times $t' \in [t^*-h, t^*]$ into a single probability distribution. In the case of a discrete-time stochastic process with time-increment $\Delta t$, the integral in \eqref{eq:risk_window_obj} should be interpreted as the time-windowed average taken over $h/\Delta t$ time steps. 

When the time window $h$ approaches 0, Problem \eqref{prob:risk_window} can be interpreted as an instantaneous maximal-risk estimation problem \cite{miller2023chancepeak}. Conversely, as $h$ approaches $T$ (with constraint \eqref{eq:risk_window_time} restricting $t^*$ to $T$), Problem \ref{prob:risk_window} assumes the form of a risk-sensitive Lagrange-type stochastic optimal control problem \cite{bonalli2023first}. If $\mathcal{R}$ is additionally chosen to be the mean with $h \rightarrow T$, then Problem  \eqref{eq:risk_window_time} will approach a standard expectation-form stochastic control problem \cite{aastrom2012introduction}.

Problem \ref{prob:risk_window} is a generically nonconvex finite-dimensional optimization in terms of its variables $(t^*, x_0^*)$. In this work, we will use measure-theoretic methods in optimal control \cite{lewis1980relaxation, altman2021constrained} to lift Problem \ref{prob:risk_window} into a convex infinite-dimensional \ac{LP} when $\mathcal{R}$ is a coherent risk measure \cite{delbaen2000coherent}. This \ac{LP} will be posed in terms of the following variables:
\begin{itemize}
    \item An initial measure
    \item A stopping (terminal) measure
    \item A pair of temporally split occupation measures
    \item An optional measure or a set of finite-dimensional terms to implement the absolute-continuity representation of coherent risk measures when $\mathcal{R}$ is not the mean.
\end{itemize}
The time-windowing in the cost is implemented by adding an additional constant state $s$ (taking on the value of the stopping time). The scaling of discretizations of the derived \acp{LP} will increase based on $(n+2)$, where the factor of `$2$' represents the original time $t$ and the augmented time $s$.

Infinite-dimensional \acp{LP} are a standard approach in the analysis and control of stochastic systems. 
Problem instances include optimal control \cite{lewis1980relaxation, altman2021constrained}, maximal mean estimation \cite{cho2002linear, kashima2010optimization}, barrier functions for safety \cite{prajna2004stochastic, prajna2007framework}, reach-avoid verification \cite{xue2023reach}, instantaneous risk estimation \cite{miller2023chancepeak}, long-time averaging \cite{tobasco2018optimal}, probabilistic planning \cite{jasour2019risk}, and exit-time location \cite{henrion2021moment}. Infinite-dimensional \ac{LP} methods for non-stochastic processes include reachable set estimation/backwards-reachable-set maximizing control \cite{Henrion_2014, majumdar2014convex, kariotoglou2013approximate, schmid2022probabilistic}, global attractor estimation \cite{goluskin2020attractor, schlosser2021converging}, maximum positively invariant set estimation \cite{ oustry2019inner}, robust analysis of uncertain systems \cite{miller2021uncertain}, analysis and control of hybrid systems \cite{prajna2004safety, zhao2019optimal, miller2023hybrid}, and analysis of time-delay systems \cite{prajna2005methods, miller2023delay}.

Infinite-dimensional \acp{LP} must be truncated into finite-dimensional optimization problems in order to admit numerical solutions. Truncation methods include the moment-\ac{SOS} hierarchy of \acp{SDP} (in the case of polynomial structure) \cite{lasserre2009moments, fantuzzi2020bounding}, random sampling with guarantees \cite{MOHAJERINESFAHANI201643}, successively refined gridded discretization \cite{cho2002linear, altman2021constrained}, and neural network certificates with SAT-based verification \cite{abate2021fossil}. All of these methods are vulnerable to a curse of dimensionality as the number of states $n$ and the complexity of representation (resp. polynomial degree, number of samples, grid spacing, number of layers) increases. Grid-based approaches for constrained risk-aware stochastic analysis and control include interpreting the \ac{ES} as a 
robust disturbance over the trajectory history \cite{chow2015risk}, 
minimizing the maximal \ac{ES} \cite{chapman2022optimizing}, using log-sum-exp constraints to approximate the \ac{ES} \cite{chapman2021risk}, and nested risk costs that admit a solution by dynamic programming in the unconstrained setting \cite{ruszczynski2010risk}. 
We separately note the recent work of \cite{bonalli2023first} which characterized a Pontryagin Maximum Principle for continuous-time risk-aware stochastic control.









The contributions of this work include
\begin{itemize}
    \item A presentation of the time-windowed risk estimation problem.
    \item A reformulation of time-windowed risk estimation into primal-dual \acp{LP} in occupation measures, with specific focus posed on the case where $\mathcal{R}$ is the mean or the \ac{ES}.
    \item Proofs of no relaxation gap and convergence of discretizations under compactness and regularity assumptions.    
\end{itemize}

This paper has the following structure: 
Section \ref{sec:preliminaries}  reviews preliminaries such as notation, risk measures, stochastic processes, and occupation measures. 
Section \ref{sec:risk_setup} presents an example of instantaneous vs. time-windowed risk estimation, details assumptions posed on Problem \ref{prob:risk_window}, and derives augmented temporal support sets for later use in the \acp{LP}.
Section \ref{sec:mean_lp} presents measure \ac{LP} formulations for Problem \ref{prob:risk_window} when $\mathcal{R}$ is chosen to be the mean, 
and finds conditions for no-relaxation-gap and strong duality. Section \ref{sec:cvar_lp} modifies the \acp{LP} to allow for the \ac{ES} as a risk measure in the time-windowing risk analysis task.
Section \ref{sec:risk_lmi} truncates the measure \ac{LP} using the moment-\ac{SOS} hierarchy and accounts for computational complexity. Section \ref{sec:examples} details examples demonstrating the effectiveness of this approach in bounding the time-windowed risk. 
Section \ref{sec:conclusion} concludes the paper.


\section{Preliminaries}
\label{sec:preliminaries}

\subsection{Acronyms/Initialisms}
\begin{acronym}
\acro{BSA}{Basic Semialgebraic}


\acro{ES}{Expected Shortfall}
\acroindefinite{ES}{an}{a}

\acro{LMI}{Linear Matrix Inequality}
\acroplural{LMI}[LMIs]{Linear Matrix Inequalities}
\acroindefinite{LMI}{an}{a}


\acro{LP}{Linear Program}
\acroindefinite{LP}{an}{a}

\acro{ODE}{Ordinary Differential Equation}
\acroindefinite{ODE}{an}{a}

\acro{PSD}{Positive Semidefinite}



\acro{SDE}{Stochastic Differential Equation}
\acroindefinite{SDE}{an}{a}

\acro{SDP}{Semidefinite Program}
\acroindefinite{SDP}{an}{a}

\acro{SOS}{Sum of Squares}
\acroindefinite{SOS}{an}{a}


\end{acronym}

\subsection{Notation}

The symbol $\R^n$ and $\N^n$ denote the $n$-dimensional sets of real numbers and natural numbers respectively. The notation $[\cdot]_+$ will refer to the scalar function $[z]_+ = \max(z, 0)$. The scalar identity function $\textrm{id}_{\R}$ is defined as $\forall q \in \R: \textrm{id}_{\R}(q) = q.$

Given a vector $x \in \R^n$ and a multi-index $\alpha \in \N^n$, the monomial $\prod_{i=1}^n x_i^{\alpha_i}$ will be represented as $x^\alpha$.
The set of polynomials in an indeterminate $x$ with real coefficients is $\R[x]$. The degree of a polynomial $p \in \R[x]$ is $\deg p$. The set of polynomials with degree at most $k$ is $\R[x]_{\leq k}$.

\subsection{Measure Theory}

Consider a compact set $K \in \R^n$. The space of continuous functions from $K$ to $\R$ is $C(K)$. This space is equipped with a supremum-norm $\|f\|_{C(K)} := \max\{|f(x)| \; | \; x \in K\}$ , giving $C(K)$ a Banach structure \cite{tao2011introduction}.
The dual space $C(K)'$ can be identified as the space of continuous linear functionals from $C(K)$ to $\R$. For elements $f \in C(K)$ and $\mu \in C(K)'$, the duality notation $\inp{f}{\mu} = \int f \; d\mu \in \R$ will be used to  denote their pairing. The subset of $k$-times differentiable functions over a set $K$ is $C^k(K).$ If the set $K$ can be partitioned into a two-argument set $x = (x_1, x_2)$ (where the dimensions of $x_1$ and $x_2$ sum to $n$), then the set $C^{k_1, k_2}(K)$ is the set of continuous functions defined over $K$ that are $k_1$-times continuously differentiable in $x_1$ and are $k_2$-times continuously differentiable in $x_2$.

The nonnegative subcones of $C(K)$ and $C(K)'$ are $C_+(K)$ and $C_+(K)'$ respectively, with the definitions:
\begin{subequations}
\begin{align}
C_+(K) &= \{f \in C(K) \; | \; \forall x \in K, \, f(x) \geq 0 \} \\
    C_+(K)' &= \{ \mu \in C(K)' \; | \; \forall f \in C_+(K), \, \inp{f}{\mu} \geq 0\}. \label{eq:riesz_markov}
\end{align}
\end{subequations}
The expression of $C_+(K)'$ in \eqref{eq:riesz_markov} is the Riesz-Markov characterization of Radon (nonnegative) measures supported on $K$, with notation $\Mp{K} = C_+(K)'$. Throughout the rest of this paper, the notation $\Mp{K}$ will be used instead of $C_+(K)'$, reflecting the more natural measure interpretation.

Given a measure $\mu \in \Mp{K}$, the measure of a set $A \subseteq K$ is $\mu(A).$ The mass of $\mu$ may be equivalently expressed as $\mu(K) = \inp{1}{\mu}.$ The measure $\mu$ is a \textit{probability measure} if $\inp{1}{\mu} = 1$. The Dirac delta $\delta_{x=x'}$ is a probability distribution supported solely at the point $x'$, satisfying $\forall f \in C(X): \inp{f}{\delta_{x = x'}} = f(x').$ 

Let $L \in \R^m$ be a compact set.
Given measures $\mu \in \Mp{K}, \nu \in \Mp{L}$, the product measure $\mu \otimes \nu$ is the unique measure satisfying the relation $\forall f \in C(K), g \in C(L): \ \inp{f(x)\cdot g(y)}{\mu(x)\otimes\nu(y)} = (\inp{f}{\mu})  (\inp{g}{\nu})$. For every linear operator $\mathscr{L} : C(K) \rightarrow C(L)$, there exists a unique adjoint $\mathscr{L}^\dagger: C(L)' \rightarrow C(K)'$ satisfying $\forall f \in C(K), \nu \in C(L)': \ \inp{\mathscr{L} f}{\nu} = \inp{f}{\mathscr{L}^\dagger \nu}$. An example of such a linear operator is differentiation (through the convention of partial derivatives of Radon measures), for which the following expression may be derived (if $f \in C^1(K)$) via integration by parts:
    \begin{align}
        \mathscr{L} f = \nicefrac{\partial}{\partial x}, \qquad  \mathscr{L}^\dagger = - \nicefrac{\partial}{\partial x},\\
        \intertext{to maintain the relation}
         \inp{v}{\nicefrac{\partial \mu}{\partial x_i}} = - \inp{\nicefrac{\partial v}{\partial x_i}}{\mu}.
    \end{align}

Given a map $q: K \rightarrow L$ and a measure $\nu \in \Mp{L},$ the pushforward of $\nu$ along $q$ (denoted as $q_\# \mu \in \Mp{K}$) is the unique measure satisfying $\forall g \in C(K): \inp{g}{q_\# \mu} = \inp{g \circ q}{\mu}.$ Letting $(x_1, x_2) \in K_1 \times K_2$ denote a point in a product space, the projection $\pi^{x_1}$ onto the first coordinate is $\pi^{x_1} (x_1, x_2) = x_1$. The pushforward of the projection operator $\pi_\#^{x_1}$ is a marginalization operator, preserving the term $x_1$ and marginalizing out $x_2$.

Let $\mu, \zeta \in \Mp{K}$ be a pair of nonnegative measures supported in $K$. The measure $\zeta$ is \textit{absolutely continuous} to $\mu$ ($\zeta \ll \mu$) if $\forall A \subseteq K$: $\mu(A)=0$ implies that $\zeta(A)=0$. If $\zeta \ll \mu$, then there exists a unique function $g \in C_+(K)$ such that $\forall f \in C(K): \int_{K} f(x) d \zeta(x) = \int_{K} f(x) g(x) d \mu(x)$. The function $g(x)$ is called the \textit{Radon-Nikodym derivative} of $\zeta$ w.r.t. $\mu$, and may be expressed as $g(x) = \frac{d \zeta}{  d \mu}(x)$.
The measure $\zeta$ is \textit{dominated} by $\mu$ ($\zeta \leq \mu$) if $\forall A \subseteq K: \mu(A) \geq \zeta(A).$ Note that domination is a stronger condition than absolute continuity, because domination imposes that $ \forall x \in K: \ \frac{d \zeta}{  d \mu}(x) \leq 1$.

\subsection{Risk Measures}

Let $(\Omega, \mathbb{P})$ be a probability space. A \textit{risk measure} is a mapping from the space of real valued random variables $V: (\Omega,\mathbb{P}) \rightarrow \mathbb{R}$ to the extended set of real numbers $\R \cup \infty$. A risk measure $\rho$ is a \textit{coherent risk measure} (in a maximum convention) if the following properties are satisfied (for all possible random variables $V_1, V_2$) \cite{artzner1999coherent}:
\begin{align*}
    &\textrm{Normalized}  & & \rho(0) = 0 \\
   & \textrm{Monotonicity}  & & \mathbb{P}\left(V_1 \leq V_2\right)=1 \implies \rho(V_1) \geq \rho(V_2) \\
   & \textrm{Sub-additivity}  & & \rho(V_1+V_2) \leq \rho(V_1) + \rho(V_2) \\
   & \textrm{Positive Homogeneity}   & & \forall \alpha \geq 0 : \ \rho(\alpha V_1) = \alpha \rho(V_1) \\
  & \textrm{Translation invariance} & &  \forall c \in \R: \ \rho(V_1 + c) = \rho(V_1) + c
\end{align*}

Every coherent risk measure has an interpretation in terms of absolute continuity. Specifically, let $\psi$ represent the probability density of the random variable $V$ in the real line $\R$. For every coherent risk measure $\rho$, there exists a function $F:\mathbb{R} \rightarrow \mathbb{R}$ and a constraint set $C$ such that \cite{delbaen2000draft}
\begin{align}
    \rho(V) = \sup_{\zeta \ll \psi, \zeta \in C} \inp{F}{\zeta}. \label{eq:coherent_abscont}
\end{align}

This paper will particularly focus on the coherent risk measures of the mean and the \ac{ES} \cite{rockafellar2002conditional}. The mean may be interpreted as having parameters $F=\textrm{id}_\R$ (identity) and $C = \{\zeta = \psi\}$.
The \ac{ES} at $\epsilon \in [0, 1)$ is defined as the expected value of $V$ conditioned that $V$ is above the $(1-\epsilon)$-level quantile \cite{rockafellar2002conditional}:
\begin{align}
    \mathrm{ES}_\epsilon(V) = \min \left\{ \lambda + \frac{1}{\epsilon} \mathbb{E}\left[(V-\lambda)_+\right] \; \middle| \; \lambda \in \mathbb{R} \right\}. \label{eq:cvar_param}
\end{align}
The \ac{ES}  has the expression in \eqref{eq:coherent_abscont} of (Equation 5.5 of \cite{follmer2010convex}):
\begin{subequations}
\label{eq:cvar_abscont}
\begin{align}
    F &= \textrm{id}_\R, & C = \left\{\frac{d\zeta}{d\psi} \leq \frac{1}{\epsilon}, \inp{1}{\zeta} = 1\right\}.
\end{align}
\end{subequations}

The \ac{ES} can also be represented as a domination-form \ac{LP} (Theorem 5.3 of \cite{miller2023chancepeak}):
\begin{subequations}
\label{eq:cvar_dom}
\begin{align}
    \mathrm{ES}_\epsilon(V) = & \sup_{\nu, \hat{\nu} \in \Mp{\R}} \inp{\mathrm{id}_\R}{\nu} \\
    & \epsilon\nu + \hat{\nu} = \psi \label{eq:cvar_dom_cond}\\
    & \inp{1}{\nu} = 1. \label{eq:cvar_mass}
\end{align}
    \end{subequations}

Other instances of coherent risk measures include the Entropic Value-at-Risk \cite{ahmadi2012entropic}, the tail-conditional median \cite{kou2013external}, and the mean plus scaled standard deviation \cite{follmer2010convex}. Risk measures that are not coherent include the Value-at-Risk (quantile) and the mean plus scaled variance.

\subsection{Stochastic Processes and Occupation Measures}

Let $\mathbb{T}$ be a time domain, such as $\mathbb{T} = [0, T]$ in continuous time or $\mathbb{T} = \{k \Delta t\}_{k=0}^{k = T/\Delta t}$ in discrete-time. 
A stochastic process evolving in a state space $X \in \R^n$ may be represented as a time-indexed family of probability measures $\{\mu_t \in \Mp{X}\}_{t \in \mathbb{T}}$ \cite{oksendal2003stochastic}. This family of probability measures admits a time-shifting semigroup operator $\textrm{Shift}_{\tau}$ as $\textrm{Shift}_{\Delta t}(\mu_t) = \mu_{t+\Delta t}.$ 

In this work, we will restrict our discussion to stochastic processes that can uniquely be described by their \textit{generator} $\Lie_{\Delta t}$. Letting $\cs$ be the domain of the generator $\Lie_{\Delta t}$ and $v$ be a member of $\cs$, the generator of the stochastic process $\{\mu_t\}_{t \in \mathbb{T}}$ is defined, denoting $t' = t+\Delta t'$, as
\begin{equation}
    \Lie_{\Delta t} v(t, x)= \lim_{\Delta t' \rightarrow \Delta t} \frac{\inp{v(t', x)}{\mu_{t'}}- \inp{v(t,x)}{\mu_t}}{\Delta t'}. \label{eq:generator_lim}
\end{equation}

 For a discrete-time system with parameter $\lambda \in \Lambda$ distributed according to a probability distribution $\zeta(\lambda)$
\begin{align}
    x[t+\Delta t] &=  f(t, x[t], \lambda[t]), \qquad \lambda[t] \sim \zeta, \\
    \intertext{the associated generator is given by an expected value on $\lambda$}
    \Lie_{\Delta t} v(t,x) &= \frac{\mathbb{E}_\zeta[v(t+\Delta t, f(t,x,\lambda))] - v(t,x)}{\Delta t}. \label{eq:generator_discrete}
\end{align}
The domain of this discrete-time process $\Lie_{\Delta t}$ in \eqref{eq:generator_discrete} is $\cs = C([0, T] \times X)$.

The generator of a continuous-time stochastic process is defined as $\Lie = \lim_{\Delta t \rightarrow 0} \Lie_{\Delta t}$ (when this limit exists). One such continuous-time stochastic process is \iac{SDE}.
An It\^{o}-type \ac{SDE} defined by a drift $f$, diffusion term $g$, and Wiener process $dW$ is
\begin{equation}
\label{eq:sde}
    dx = f(t, x) dt + g(t, x) d{W}.
\end{equation}
The generator of the dynamics in \eqref{eq:sde} is
\begin{equation}
\label{eq:lie}
    \Lie v(t, x) = \partial_t v + f \cdot \nabla_x v + \frac{1}{2} g^T \left(\nabla^2_{xx}v\right)  g,
\end{equation}
in which the domain of $\Lie$ is $\cs = C^{1, 2}([0, T] \times X)$ (once-differentiable in $t$ and twice-differentiable in $x$).

\section{Time-Windowed Risk Setup}
\label{sec:risk_setup}


This section presents preparatory material for the later relaxation of \eqref{eq:risk_window} into infinite-dimensional \acp{LP} in measures/continuous functions. It offers an illustrative example of instantaneous vs. time-windowed risk estimation, lists assumptions on the stochastic generator $\Lie$, and describes temporal support sets.



\subsection{Example of Risk Estimation}

To motivate instantaneous and time-windowed risk estimation, consider the (deterministic) black signal $p(t)$ plotted in Figure \ref{fig:demo_peak_inst_avg} up to a time horizon of $T=8$.

\old{
Figure \ref{fig:demo_peak_inst_avg} and Figure \ref{fig:demo_peak_inst_avg_multiple_s} give an intuition on the difference between the instantaneous peak and a time-averaged peak.  The red dotted lines plots the time-windowed average with $h=1.5$. The blue square at $t_i=2.0017$ is the instantaneous peak of $p_i =  1.5435$. The time-windowed average at $t_i$ is $\int_{t'=t_i-1}^{t_i} p(t') dt' =  0.0173$. The blue star at $t_w = 5.7502$ achieves the maximal time-windowed average peak of $p_w = 0.9379$, while the instantaneous value of $p(t_a)$ is   1.5438. The magenta curve of Fig. \ref{fig:demo_peak_inst_avg} is the region that is averaged to produce $p_w$.  }
\begin{figure}[!h]
    \centering
    \includegraphics[width=0.85\linewidth]{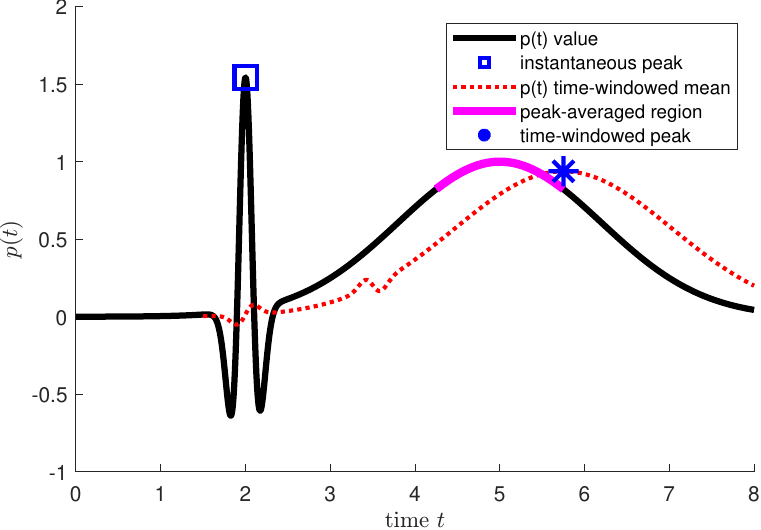}
    \caption{Comparison of instantaneous peak (square) and time-windowed average peak (star) of a signal $p(t)$ (black curve)}
    \label{fig:demo_peak_inst_avg}
\end{figure}

Letting $s$ be a stopping time, a measure $\mu_{p, s}$ can be chosen as the unique occupation measure supported on the graph of $(t, p(t))$ between times $t \in [s-h, s].$ The marginalized measure $(1/h)\pi_{\#}^p \mu_{p, s} $ is a probability distribution of the value of $p$, as weighted by time. 

The blue curve of Figure \ref{fig:signal_cdf} is the empirically evaluated Conditional Density Function constructed from the signal values $\mathcal{D}=\{p(t_0 + h(k/N))\}_{k=0}^{N}$  (with $t_0 = 1.25, \ h=1.5, \ N = 20000$). The red dotted line of Figure \ref{fig:signal_cdf} plots the mean value of $\mathcal{D}$ as  $p=0.0592$. The green dash-dotted line is the 90\% quantile value of 0.5387, and the yellow dotted line is 90\% \ac{ES} value of 1.1763. This example justifies the use of time-windowed risk measures for deterministic systems, which are a narrow specialization of more general stochastic systems.
\begin{figure}[!h]
    \centering
    \includegraphics[width=\linewidth]{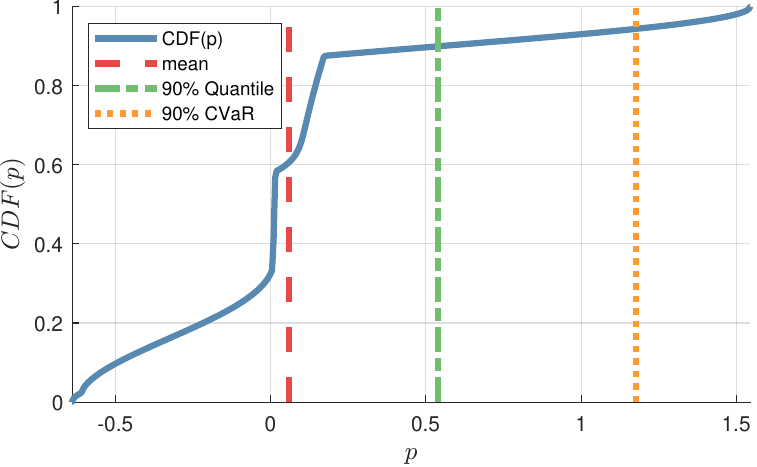}
    \caption{Empirical Cumulative Density Function of the value of $p(t)$ in times $[1.25, 2.75]$}
    \label{fig:signal_cdf}
\end{figure}

\begin{figure*}
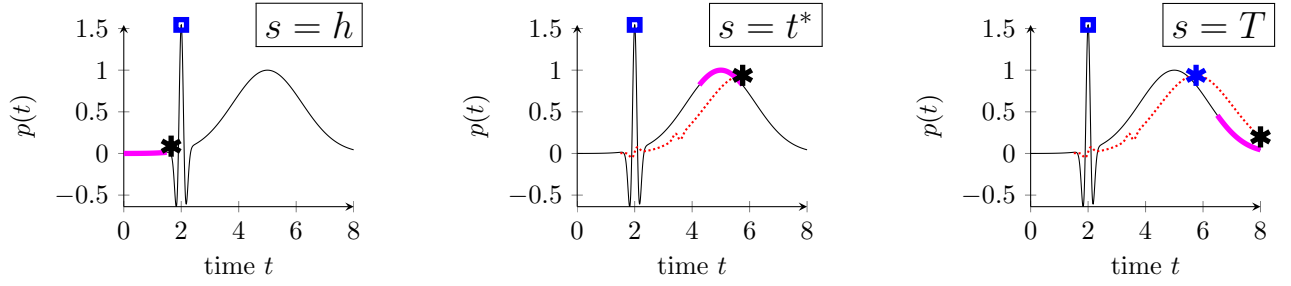

    \centering
    \begin{tikzpicture}
        \input{img/plot_data_first_window}
    
        \path (axis1.south) ++ (4.5cm,0) coordinate (test);
    
        \input{img/plot_data_second_window}
    
        \path (axis2.south) ++ (4.5cm,0) coordinate (test);
    
        \input{img/plot_data_third_window}
        
        \node[right = 0.1cm of axis1.north]  {\fcolorbox{black}{white}{\Large \textbf{$s=h$}}};  
        \node[right = 0.1cm of axis2.north]  {\fcolorbox{black}{white}{\Large \textbf{$s=t^*$}}}; 
        \node[right = 0.1cm of axis3.north]  {\fcolorbox{black}{white}{\Large \textbf{$s=T$}}};   
    \end{tikzpicture}
    \caption{\old{Comparison of instantaneous peak (square) to time-windowed average peak (blue star) of a signal $p(t)$ from Figure \ref{fig:demo_peak_inst_avg} with stopping time $s \in \{h, \ t^*, \ T\}$. The magenta region highlights $p(t)$ in times $[s-h, s]$. The time-windowed average in this range is marked with the black star. The red dotted line depicts all time-windowed averages up to $t\leq s$.}}
    \label{fig:demo_peak_inst_avg_multiple_s}
\end{figure*}

\subsection{Assumptions}

The following assumptions will be posed throughout this paper:

\begin{itemize}
    \item[A1] The sets 
    $X_0, X$, and $[0, T]$ are all compact.
    \item[A2] The function $p(x)$ is continuous.
    \item[A3] Trajectories stop according to the rule $\min(T, \tau_X)$.
    \item[A4] The set of functions $\cs = \textrm{dom}(\Lie)$ satisfies $\cs \subseteq C([t_0, T] \times X)$
    with $1 \in \cs$ and $\Lie 1 = 0$.
    \item[A5] The set $\cs$ is closed under multiplications and separates points (for all distinct $(t_1, x_1), (t_2, x_2) \in [0, T] \times X$, there exists a $v \in \cs$ with $v(t_1, x_1) \neq v(t_2, x_2)$).    
    \item[A6] A countable set $\{v_k\} \in \cs$ exists such that $\forall v \in \cs,$ the tuple $(v, \Lie v)$ is contained within the bounded pointwise closure of $\textrm{span}(\{(v_k, \Lie v_k)\})$. 
\end{itemize}

Assumptions A4-A6 are technical assumptions on the stochastic generator derived from \cite{cho2002linear}. Acceptable generators include many \acp{SDE}, L\'{e}vy processes, and discrete-time Markov processes.

\subsection{Temporal Supports}

The \ac{LP} formulation of Problem \ref{prob:risk_window} that we propose in this paper requires the addition of an augmented time variable $s$ to the stochastic dynamics. This variable $s$ will be constant, with a dynamical law of $\dot{s}=0$ for continuous time or $s_{t+\Delta t} = s_t$ for discrete-time. The value of $s$ will represent the stopping time of the stochastic process. Given a generator $\Lie$ from dynamics in \eqref{eq:risk_window_dynamics} defined in terms of $(t, x)$, the augmented dynamics $\hat{\Lie}$ in terms of $(s, t, x)$ is 
\begin{align}
    \hat{\Lie}: \quad  \textrm{$s(t)$ is constant}, \quad  x(t) \textrm{\ follows  $\Lie$} \label{eq:lie_hat}
\end{align}

Continuous-time temporal support sets in $(s, t)$ can be defined as
\begin{subequations}
\label{eq:omega_support_c}
\begin{align}    
    \Omega_+^c &= \{(s, t) \in [h, T] \times [0, T] \mid t \in [s-h, s] \} \\
    \Omega_-^c &= \{(s, t) \in [h, T] \times [0, T] \mid t \in [0, s-h] \},
\end{align}
\end{subequations}
and the discrete-time temporal-support sets as
\begin{subequations}
\label{eq:omega_support_d}
\begin{align}    
    \Omega_+^d &= \{(s, t) \in [h, T] \times [0, T] \mid t \in [s-h, s] \} \\
    \Omega_-^d &= \{(s, t) \in [h, T] \times [0, T] \mid t \in [0, s-h-\Delta t] \}.
\end{align}
\end{subequations}

For ease of notation, the temporal support sets $\Omega_\pm$ will refer to $\Omega_\pm^c$ in continuous-time or alternatively $\Omega_\pm^d$ in discrete-time.


We will also introduce a temporal augmentation map $\varphi$:
\begin{equation}
    \varphi: (s, x) \mapsto (s, s, x).
\end{equation}

\section{Time-Windowed Mean Programs}
\label{sec:mean_lp}
This section formulates infinite-dimensional \acp{LP} for the time-windowed risk analysis Problem \ref{prob:risk_window} when $\mathcal{R}$ is chosen to be the mean. A value of $\epsilon \in [0, 1)$ will assumed to be fixed throughout this section.

\subsection{Measure Program}
\old{
The following measures will be defined to formulate the mean-type time-windowed risk \ac{LP} from \eqref{eq:risk_window}:
\begin{subequations}
\label{eq:risk_meas_def}
\begin{align}
    \mu_0(s, x) &\in \Mp{[h, T]\times  X_0}  & & \textrm{Initial}  \\ 
    \mu_\tau(s, x) &\in \Mp{[h, T] \times X}  & & \textrm{Terminal} \label{eq:risk_meas_def_term} \\
    \mu_+(s, t, x) &\in \Mp{\Omega_+ \times X}  & & \textrm{Risk Occ.} \label{eq:risk_meas_def_occ} \\
    \mu_-(s, t, x) & \in \Mp{\Omega_- \times X} & & \textrm{Past Occ.} \label{eq:risk_meas_def_past}
\end{align}
\end{subequations}


The mean-type measure program for \eqref{eq:risk_window} is:
\begin{prob}
\label{prob:risk_mean_meas}
Find an initial measure $\mu_0,$ a terminal measure $\mu_\tau$, a risk occupation measure $\mu_+$, and a past occupation measure $\mu_-$ to supremize:
\begin{subequations}
\label{eq:risk_mean_meas}
\begin{align}
p^* = & \ \sup \quad \inp{p}{\mu_+}/h \label{eq:risk_mean_meas_obj} \\
  \textrm{s.t.} \   & \varphi_\# \mu_\tau = \delta_0 \otimes\mu_0 +  \hat{\Lie}^\dagger (\mu_- + \mu_+) \label{eq:risk_mean_meas_flow}\\
    & \inp{1}{\mu_0} = 1 \label{eq:risk_mean_meas_prob}\\
    & \inp{1}{\mu_+} = h \label{eq:risk_mean_meas_window} \\
    & \textrm{Support constraints in \eqref{eq:risk_meas_def}} \label{eq:risk_mean_meas_def}
\end{align}
\end{subequations}
\end{prob}

The sum $\mu_- + \mu_+$ serves as the relaxed occupation measure for dynamics following $\hat{\Lie}$ in \eqref{eq:lie_hat}. The mean value of $p$ is only evaluated in the range $t \in [s-h, s],$ which is enforced by the $\Omega_+$ support constraint in \eqref{eq:risk_meas_def_term}. Constraint \eqref{eq:risk_mean_meas_window} ensures that the `clock' for $\mu_+$ will elapse at exactly $h$ time units.
The notation $\varphi_\# \mu_\tau$ from \eqref{eq:risk_mean_meas_flow}  can be read as $\forall v \in C([h, T]^2 \times X): \ \inp{v(s, t, x)}{\varphi_\# \mu_\tau} = \inp{v(s, s, x)}{\mu_\tau}.$

\begin{thm}
\label{thm:risk_mean_upper}
    Under Assumption A3, program \eqref{eq:risk_mean_meas} is an upper-bound on \eqref{eq:risk_window} ($p^* \geq P^*$) when $\mathcal{R}$ is the mean. 
\end{thm}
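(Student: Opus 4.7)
The plan is to prove the upper bound by the standard trajectory-to-measure embedding: for any feasible pair $(t^*, x_0^*)$ of Problem \ref{prob:risk_window}, we construct a 4-tuple $(\mu_0, \mu_\tau, \mu_+, \mu_-)$ that is feasible for \eqref{eq:risk_mean_meas} and whose objective value equals the value attained by $(t^*, x_0^*)$ in \eqref{eq:risk_window_obj}. Taking the supremum over feasible $(t^*,x_0^*)$ then yields $P^* \leq p^*$.

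Fix a feasible $(t^*, x_0^*)$ and denote the stochastic trajectory $x(\cdot)$ starting from $x_0^*$ and following $\Lie$ up to its (a.s.\ finite by Assumption A3) stopping time $t^*$. I would define the measures as expectations over sample paths, with the augmented clock $s$ frozen at $t^*$:
\begin{align*}
    \inp{v}{\mu_0} &= v(t^*, x_0^*), & v &\in C([h,T]\times X_0),\\
    \inp{v}{\mu_\tau} &= \mathbb{E}\,v(t^*, x(t^*)), & v &\in C([h,T]\times X),\\
    \inp{v}{\mu_-} &= \mathbb{E}\!\int_{0}^{t^*-h}\! v(t^*, t, x(t))\,dt, & v &\in C(\Omega_-\times X),\\
    \inp{v}{\mu_+} &= \mathbb{E}\!\int_{t^*-h}^{t^*}\! v(t^*, t, x(t))\,dt, & v &\in C(\Omega_+\times X).
\end{align*}
(In discrete time, replace integrals by sums over $\{k\Delta t\}$; Assumption A4 ensures $1\in\cs$ so all expectations/integrals are well-defined.) The support constraints in \eqref{eq:risk_meas_def} then hold by construction, $\inp{1}{\mu_0}=1$ gives \eqref{eq:risk_mean_meas_prob}, and $\inp{1}{\mu_+}= \mathbb{E}[t^* - (t^*-h)] = h$ gives \eqref{eq:risk_mean_meas_window}. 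The objective identity $\inp{p}{\mu_+}/h = \mathbb{E}\!\left[\tfrac{1}{h}\!\int_{t^*-h}^{t^*} p(x(t))\,dt\right]$ is immediate from the definition of $\mu_+$ and matches \eqref{eq:risk_window_obj} because $\mathcal{R}$ is the mean.

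The main work is establishing the flow equation \eqref{eq:risk_mean_meas_flow}. I would test both sides against an arbitrary $v \in \hat\cs = \textrm{dom}(\hat\Lie)$ and invoke Dynkin's formula for the augmented generator $\hat\Lie$ in \eqref{eq:lie_hat} applied to the process $(s(t), t, x(t))=(t^*, t, x(t))$ between $t=0$ and $t=t^*$:
\begin{equation*}
    \mathbb{E}\,v(t^*, t^*, x(t^*)) - v(t^*, 0, x_0^*) = \mathbb{E}\!\int_{0}^{t^*} \hat\Lie v(t^*, t, x(t))\,dt.
\end{equation*}
The left-hand side is $\inp{v\circ\varphi}{\mu_\tau} - \inp{v(s,0,x)}{\delta_0\otimes \mu_0} = \inp{v}{\varphi_\#\mu_\tau - \delta_0\otimes\mu_0}$, and the right-hand side splits at $t=t^*-h$ into $\inp{\hat\Lie v}{\mu_-+\mu_+} = \inp{v}{\hat\Lie^\dagger(\mu_-+\mu_+)}$ by definition of the adjoint. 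Equating both and using that $v$ was arbitrary in $\hat\cs$ yields \eqref{eq:risk_mean_meas_flow} in the sense of the duality pairing against $\hat\cs$.

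The hard part is justifying that this pairing-equality against $\hat\cs$ suffices to validate \eqref{eq:risk_mean_meas_flow} as a Radon-measure identity; this is exactly where Assumptions A4--A6 are needed. Concretely, $1\in\cs$ with $\Lie 1=0$ (A4) handles the mass/identity component, A5 (closure under multiplication and point-separation) lets me invoke Stone--Weierstrass to show $\hat\cs$ is uniformly dense in continuous test functions, and A6 provides a countable generating family whose bounded pointwise closure captures $\hat\cs$, which is the hypothesis used in \cite{cho2002linear} to push Dynkin-type identities from a core of test functions to the full space $C$. Once density is in hand, the equality of linear functionals on a dense subset extends to equality of the underlying signed Radon measures, completing the verification of feasibility. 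Since $(t^*, x_0^*)$ was arbitrary, taking supremum over feasible trajectories yields $P^*\leq p^*$.
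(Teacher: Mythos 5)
Your proof is correct and follows essentially the same route as the paper's: embed the feasible pair $(t^*, x_0^*)$ into the measure program via a Dirac initial measure, a terminal measure at the stopped distribution, and time-restricted occupation measures with the clock $s$ frozen at $t^*$, then match the objective. You are more explicit than the paper in verifying the Liouville constraint \eqref{eq:risk_mean_meas_flow} via Dynkin's formula (the paper takes this for granted); the only caveat is that the flow constraint lives in $\cs([h,T]\times[0,T]\times X)'$ (see the duality setup in Appendix A), so testing against $v\in\cs$ already suffices and your closing Stone--Weierstrass extension step, which imports A4--A6, is unnecessary --- dropping it keeps the argument within the theorem's stated hypothesis of A3 alone.
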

\begin{proof}
This proof will construct feasible measures in \eqref{eq:risk_meas_def} from a feasible point $(t^*, x_0^*) \in [h, T] \times X_0$ of \eqref{eq:risk_window}.  The initial measure $\mu_0$ may be set to $\mu_0 = \delta_{s=t^*, \  x = x_0}$. Letting $x(t^*)$ represent the probability distribution of the process at time $t^*$, the terminal measure may be chosen as $\mu_\tau = \delta_{t=t^*, \ s=t^*} \otimes x(t^*)$. Defining $\mu(t, x)$ as the occupation measure of the process $x(t)$ between times $0$ and $t^*$, let $\mu_{[0, t^*-h]}(t, x)$ and $\mu_{[t^*-h, t]}(t, x)$  refer to the restrictions of $\mu$ in times $[0, t^*-h]$ and $[t^*-h, t^*]$ respectively. The decomposed relaxed occupation measures may therefore be chosen as $\mu_- = \delta_{s=t^*} \otimes \mu_{[0, t^*-h]}$ and $\mu_+ = \delta_{s=t^*} \otimes \mu_{[t^*-h, t^*]}$. Note that constraint \eqref{eq:risk_mean_meas_window} is satisfied because $\mu_+$ is defined over exactly $h$ time units.

The objective term in \eqref{eq:risk_mean_meas_obj} implements the time-averaging operation in \eqref{eq:risk_window} w.r.t. choosing $\mathcal{R}$ as the mean.
The upper-bound relation $p^* \geq P^*$ therefore holds, because this construction procedure induces an injective map from the feasible points $(t^*, x_0^*)$ to feasible points of \eqref{eq:risk_mean_meas_flow}-\eqref{eq:risk_mean_meas_def}
\end{proof}

\begin{thm}
\label{thm:no_relaxation_mean}
    Under Assumptions A1-A6, there is no relaxation gap ($p^* = P^*$) when $\mathcal{R}$ is the mean.
\end{thm}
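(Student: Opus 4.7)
My plan is to prove the reverse inequality $p^* \leq P^*$, since Theorem \ref{thm:risk_mean_upper} already gives $p^* \geq P^*$. The strategy is to disintegrate any feasible quadruple with respect to the initial probability measure $\mu_0$ and then invoke the stochastic-process representation theorem of \cite{cho2002linear} inside each conditional slice to recover a genuine trajectory whose cost does not exceed $P^*$.

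First I would take any feasible $(\mu_0, \mu_\tau, \mu_+, \mu_-)$ for Program \ref{prob:risk_mean_meas}. Because $\mu_0$ is a probability measure on $[h, T] \times X_0$ by \eqref{eq:risk_mean_meas_prob}, and because \eqref{eq:risk_mean_meas_flow}, \eqref{eq:risk_mean_meas_window} and the support constraints \eqref{eq:risk_meas_def} are all linear in the measures, I would disintegrate $\mu_\tau, \mu_+, \mu_-$ against $\mu_0$. A standard Fubini/disintegration argument then yields, for $\mu_0$-almost every $(s^*, x_0^*) \in [h, T] \times X_0$, a conditional quadruple $(\delta_{(s^*, x_0^*)}, \mu_\tau^{s^*, x_0^*}, \mu_+^{s^*, x_0^*}, \mu_-^{s^*, x_0^*})$ that is itself feasible for Program \ref{prob:risk_mean_meas}, with $\mu_+^{s^*, x_0^*}$ having mass exactly $h$ and support inside $\Omega_+$.

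Next I would invoke the occupation-measure representation theorem of Cho-Stockbridge \cite{cho2002linear}. Its hypotheses are exactly the origin of A4-A6 and lift to the augmented generator $\hat{\Lie}$ from \eqref{eq:lie_hat}, since the constant state $s$ enters only as a parameter. Applied to each conditional system, the theorem produces a stochastic trajectory $x(t)$ starting from $x_0^*$ and stopped at $\min(s^*, \tau_X)$ whose expected initial, total-occupation, and terminal measures coincide with $\delta_{(s^*, x_0^*)}$, $\mu_+^{s^*, x_0^*} + \mu_-^{s^*, x_0^*}$, and $\mu_\tau^{s^*, x_0^*}$ respectively. The split between $\mu_+^{s^*, x_0^*}$ and $\mu_-^{s^*, x_0^*}$ is fixed by the disjoint temporal supports $\Omega_+$ and $\Omega_-$, so $\mu_+^{s^*, x_0^*}$ is forced to be the restriction of the expected occupation measure to $[s^*-h, s^*]$, giving
\begin{align*}
\tfrac{1}{h}\inp{p}{\mu_+^{s^*, x_0^*}} = \E_{x_0^*}\!\left[\tfrac{1}{h}\int_{s^*-h}^{s^*} p(x(t')) \, dt'\right] \leq P^*.
\end{align*}
Averaging against $\mu_0$ yields $\inp{p}{\mu_+}/h \leq P^*$, and taking the supremum over feasible tuples gives $p^* \leq P^*$.

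The main obstacle is the rigorous use of the Cho-Stockbridge representation for the \emph{split} occupation measures, rather than a single relaxed occupation measure. My approach would be to first apply the theorem to the combined $\mu_+ + \mu_-$ to extract one underlying stochastic trajectory, and then argue that the uniqueness of the time-parameterization under $\hat{\Lie}$ (whose $t$-component evolves as $\dot t = 1$ while $s$ is frozen) forces the $\Omega_\pm$ support partition to coincide with the natural temporal split of the trajectory into $[0, s^*-h]$ and $[s^*-h, s^*]$. A secondary routine check is that the stopping rule inherited from A3 and the bound $s^* \leq T$ make $\min(s^*, \tau_X)$ consistent with $\min(T, \tau_X)$ on the window of interest.
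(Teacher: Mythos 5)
Your proposal is correct and follows essentially the same route as the paper's proof: both reduce to the Cho--Stockbridge representation theorem (Theorem 3.3 of \cite{cho2002linear}) applied to the aggregated occupation measure $\mu_J = \mu_+ + \mu_-$ under A1--A6, and both observe that the $\Omega_\pm$ support constraints together with $\inp{1}{\mu_+} = h$ force $\mu_+$ to be the restriction of $\mu_J$ to the window $[s-h, s]$. The only substantive difference is that you make explicit the disintegration against $\mu_0$ and the final averaging step needed when $\mu_0$ is not a Dirac, a point the paper's proof leaves implicit.
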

\begin{proof}
Let $(\mu_0, \mu_\tau, \mu_J)$ (with $\mu_J \in \Mp{[h, T] \times [0, T] \times X}$) satisfy
\begin{equation}
    \mu_\tau = \delta_0 \otimes\mu_0 +  \hat{\Lie}^\dagger \mu_J \label{eq:risk_mean_meas_flow_J}
\end{equation}
    By Assumptions A1-A6 and Theorem 3.3 of \cite{cho2002linear}, every feasible tuple $(\mu_0, \mu_\tau, \mu_J)$ satisfying \eqref{eq:risk_mean_meas_flow}-\eqref{eq:risk_mean_meas_def} is supported on the graph of stochastic trajectories of $\hat{\Lie}$.    
    
    Furthermore, the marginalization $\pi^t_{\#} \mu_J$ is absolutely continuous to the Lebesgue distribution defined over $[0, T]$ (continuous-time), or to a spike train $\sum_{k=0}^{T/\Delta t} \delta_{t=k \Delta t}$ (discrete-time). As such, there exists a decomposition of $\mu_J$ into  $\mu_-$ and $\mu_+$ through the $(s, t)$ temporal supports in \eqref{eq:omega_support_c} or \eqref{eq:omega_support_d} subject to the requirement \eqref{eq:risk_mean_meas_window}. Such a decomposition may be achieved through the restriction procedure from Theorem 3.1 of \cite{henrion2009approximate}, in which $\mu_+$ is chosen to maximize $\inp{t}{\mu_+}$ such that $\mu_+ + \mu_- = \mu_J, \ \inp{1}{\mu_+}=h$ under \eqref{eq:risk_meas_def_occ}-\eqref{eq:risk_meas_def_past}.

    It therefore holds that the tuple $(\mu_0, \mu_\tau, \mu_- + \mu_+)$ is supported on the graph of a stochastic process, and all measures satisfy the support constraints in \eqref{eq:risk_meas_def}.  Given that $\mu_0$ is a probability distribution by \eqref{eq:risk_mean_meas_prob}, the term $\inp{p}{\mu_+}/\tau$ in the objective \eqref{eq:risk_mean_meas_obj} evaluates to the integral in \eqref{eq:risk_window_obj}. No relaxation gap is therefore proven under A1-A6.
\end{proof}
}


\subsection{Function Program}


A dual \ac{LP} for \eqref{eq:risk_mean_meas_obj} will be expressed through the declaration of  variables $v \in \cs([h, T] \times [0, T] \times X)$ and $\gamma,\xi \in \R$. 

\begin{prob}
\label{prob:risk_mean_cont}
Find an auxiliary function $v$ and scalars $(\gamma, \xi)$ to infimize:
\begin{subequations}
\label{eq:risk_mean_cont}
\begin{align}
    d^* 
    = & \inf_{v, \ \gamma, \ \xi} \quad \gamma + h \xi & \\
   \textrm{s.t.} \qquad   & \forall x \in X_0: \nonumber \\
   & \qquad \gamma \geq  v(s,0, x)   \label{eq:risk_mean_cont_init}\\
    & \forall (s, x) \in [h, T] \times X:  \nonumber \\
    &\qquad  v(s, s, x) \geq 0 & & \label{eq:risk_mean_cont_term}\\
    &\forall (s, t, x) \in \Omega_+ \times X: \nonumber \\
    & \qquad \hat{\Lie}v(s, t, x) +p(x)/h\leq  \xi& \label{eq:risk_mean_cont_occ_fw}  \\
    & \forall (s, t, x) \in \Omega_- \times X: \nonumber \\
    & \qquad \hat{\Lie}v(s, t, x) \leq  0  & & \label{eq:risk_mean_cont_occ_bw} \\
    &v \in \cs([h, T] \times [0, T]\times X) & \label{eq:risk_mean_cont_poly} \\
    & \gamma, \ \xi \in \R.
\end{align}
\end{subequations} 
\end{prob}

\begin{thm}
\label{thm:duality_mean}
    Strong duality between problems \eqref{eq:risk_mean_meas} and \eqref{eq:risk_mean_cont} ($p^* = d^*$) will hold with attainment of optimality under assumptions A1-A6.
\end{thm}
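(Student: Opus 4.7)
The plan is three stages: weak duality by a multiplier calculation, weak-$*$ compactness of the primal feasible set for attainment, and strong duality via a closedness-based infinite-dimensional LP theorem in the Anderson--Nash framework, all leveraging A1--A6.

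Weak duality $p^* \leq d^*$ follows from a standard pairing argument. Given any primal-feasible $(\mu_0,\mu_\tau,\mu_+,\mu_-)$ and any dual-feasible $(v,\gamma,\xi)$, test the flow equation \eqref{eq:risk_mean_meas_flow} against $v$ to obtain
\[
\inp{v(s,s,x)}{\mu_\tau} \;=\; \inp{v(s,0,x)}{\mu_0} + \inp{\hat\Lie v}{\mu_- + \mu_+}.
\]
Combining this identity with the dual inequalities \eqref{eq:risk_mean_cont_init}--\eqref{eq:risk_mean_cont_occ_bw}, and using the mass equalities \eqref{eq:risk_mean_meas_prob} and \eqref{eq:risk_mean_meas_window}, yields $\inp{p}{\mu_+}/h \leq \gamma + h\xi$. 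Taking a supremum over primal feasible points and an infimum over dual feasible points then gives $p^* \leq d^*$.

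For attainment of $p^*$, I would establish weak-$*$ compactness of the primal feasible set. Assumption A1 makes all supports compact; the masses of $\mu_0$ and $\mu_+$ are fixed at $1$ and $h$ by \eqref{eq:risk_mean_meas_prob}--\eqref{eq:risk_mean_meas_window}; testing \eqref{eq:risk_mean_meas_flow} against $v\equiv 1$ (permitted by A4, with $\hat\Lie 1=0$) gives $\inp{1}{\mu_\tau} = \inp{1}{\mu_0} = 1$; and $\inp{1}{\mu_-}$ is bounded by $T-h$ through an analogous test against a suitably chosen time-monotone function in $\cs$. Banach--Alaoglu then provides weak-$*$ compactness, and continuity of $p$ together with compactness of $X$ makes the objective $\mu_+ \mapsto \inp{p}{\mu_+}/h$ weakly-$*$ continuous, so the supremum in \eqref{eq:risk_mean_meas_obj} is attained.

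Strong duality $d^* \leq p^*$ will be obtained by invoking an infinite-dimensional LP duality theorem of the Anderson--Nash type (as used in \cite{cho2002linear, tobasco2018optimal, henrion2021moment}). The requisite hypotheses are (i) finiteness of the primal value, which follows from compactness of supports and continuity of $p$, (ii) non-emptiness of the primal feasible set, provided by the construction in Theorem \ref{thm:risk_mean_upper}, and (iii) weak-$*$ closedness of the image of the primal constraint operator sending $(\mu_0,\mu_\tau,\mu_+,\mu_-)$ to the residual of \eqref{eq:risk_mean_meas_flow}--\eqref{eq:risk_mean_meas_window}. The main obstacle is verifying condition (iii), because $\cs$ may be strictly smaller than $C([h,T]\times[0,T]\times X)$ (for instance $\cs = C^{1,2}$ in the SDE case). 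This is precisely the role of A4--A6: multiplicative closure and the point-separation property in A5 make $\cs$ dense in $C$ under bounded-pointwise convergence, while A6 supplies a countable generating family for which $\Lie$ has a well-defined bounded-pointwise closure. Together these reduce the closedness check to exactly the setting of \cite[Thm.~3.3]{cho2002linear}, from which the range of the constraint operator is shown to be weakly-$*$ closed; strong duality with attainment in both primal and dual then follows.
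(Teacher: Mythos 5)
Your proposal is correct and follows essentially the same route as the paper's Appendix~A: weak duality via the standard conic pairing, uniform mass bounds obtained by testing the Liouville constraint against $v=1$ and $v=t$, and then a packaged infinite-dimensional LP duality theorem (the paper verifies conditions R1--R4 of Theorem~2.6 of \cite{tacchi2021thesis}, which encapsulate exactly the compactness, boundedness, continuity, and feasibility hypotheses you check for your Anderson--Nash argument). Two minor caveats: the weak-$*$ closedness of the constraint operator's range follows from the mass bounds and compact supports you already established rather than from Theorem~3.3 of \cite{cho2002linear} (which the paper invokes only for the no-relaxation-gap result, not for duality), and attainment is guaranteed on the measure side only, so your closing claim of attainment in both primal and dual is stronger than what these sufficient conditions deliver.
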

\begin{proof}
    See Appendix \ref{app:duality_mean}.
\end{proof}

\begin{rmk}
    The strong duality statement from Theorem \ref{thm:duality_mean} allows a control practitioner to get the same answer regardless of whether they implement the measure side (Problem \ref{prob:risk_mean_meas}) or the continuous-function side (Problem \ref{prob:risk_mean_cont}) of the time-windowed risk analysis Problem \ref{prob:risk_window} during discretization.
\end{rmk}

\section{Time-Windowed CVaR Programs}
\label{sec:cvar_lp}
This section defines \acp{LP} for Program \ref{prob:risk_window} when the \ac{ES} is used as the risk measure $\mathcal{R}$. Modifications for the \ac{ES} framework (as compared to the mean-type measure program detailed in Section \ref{sec:mean_lp}) will be specifically noted. 


\subsection{Measure Program}

The time-windowed \ac{ES} estimation program will use the domination-form expression for the \ac{ES} from Equation \eqref{eq:cvar_dom}. Given compactness of the state space $X$ and continuity of the objective function $p$ by Assumptions A1 and A2 respectively, it results that the following two values are finite and are attained:
\begin{align}
    p_{\min} &= \min_{x \in X} p(x), & p_{\max} &= \max_{x \in X} p(x).
\end{align}

It therefore holds that the set $[0, T] \times X \times [p_{\min}, p_{\max}]$ is compact under Assumptions A1 and A2. We introduce the following measures to supplement the measures in \eqref{eq:risk_meas_def}:
\begin{subequations}
\label{eq:risk_meas_def_cvar}
\begin{align}
    \nu &\in \Mp{[p_{\min}, p_{\max}]}  & & \textrm{\ac{ES} Evaluation}  \\ 
    \hat{\nu} &\in \Mp{[p_{\min}, p_{\max}]}  & & \textrm{\ac{ES} Slack} \label{eq:risk_meas_def_cvar_slack_term}.
\end{align}
\end{subequations}

The \ac{ES}-type measure \ac{LP} for \eqref{eq:risk_window} is:
\begin{prob}
\label{prob:risk_cvar_meas}
Find an initial measure $\mu_0,$ a terminal measure $\mu_\tau$, a risk occupation measure $\mu_+$, a past occupation measure $\mu_-$, and \ac{ES} decomposition measures $\nu, \hat{\nu}$ to supremize:
\begin{subequations}
\label{eq:risk_cvar_meas}
\begin{align}
p^*_{ES} = & \ \sup \quad \inp{\textrm{id}_\R}{\nu} \label{eq:risk_cvar_meas_obj} \\
  \textrm{s.t.} \   & \mu_\tau = \delta_0 \otimes\mu_0 +  \hat{\Lie}^\dagger (\mu_- + \mu_+) \label{eq:risk_cvar_meas_flow}\\
    & \inp{1}{\mu_0} = 1 \label{eq:risk_cvar_meas_prob}\\
    & \inp{1}{\mu_+} = h \label{eq:risk_cvar_meas_window} \\
    & \inp{1}{\nu} = 1 \label{eq:risk_cvar_meas_cvarmass}\\
    & \epsilon\nu + \hat{\nu} = p_\#\pi^x_\#(\mu_+/h) \label{eq:risk_cvar_meas_cvarcon}\\
    & \textrm{Support constraints in \eqref{eq:risk_meas_def} and \eqref{eq:risk_meas_def_cvar}} \label{eq:risk_cvar_meas_def}
\end{align}
\end{subequations}
\end{prob}

Constraints \eqref{eq:risk_cvar_meas_cvarmass}-\eqref{eq:risk_cvar_meas_cvarcon} are added to the formulation in Problem \ref{prob:risk_mean_meas}. Note that the objective in \eqref{eq:risk_cvar_meas_obj} (mean of $\nu$) is also different than the objective in \eqref{eq:risk_mean_meas_obj} ($\inp{p}{\mu_+}/h$). 

\begin{rmk}
    The term $p_\# (\pi^x_\# \mu_+ /h)$ can be understood w.r.t. its pairing with  functions $\forall g \in C([p_{\min}, p_{\max}])$ as
    \begin{align}
        \inp{g}{p_\# (\pi^x_\# \mu_+ /h)} = \inp{g(p(x))}{\mu_+(s, t, x)}/h.
    \end{align}
\end{rmk}

\begin{rmk}
\label{rmk:cvar}
The objective in \eqref{eq:risk_cvar_meas_obj} together with constraints \eqref{eq:risk_cvar_meas_cvarcon}-\eqref{eq:risk_cvar_meas_cvarmass} implements the \ac{ES} expression in \eqref{eq:cvar_dom} with respect to the univariate probability distribution $p_\# (\pi^x_\# \mu_+ /h)$.
\end{rmk}

    The following \ac{ES}-type bounding and no-relaxation-gap theorems will be proven by extending arguments from the mean-type  Theorems \ref{thm:risk_mean_upper} and \ref{thm:no_relaxation_mean}.
\begin{thm}
\label{thm:risk_cvar_upper}
    When $\mathcal{R}$ is the \ac{ES} and Assumption A3 holds, Problem \ref{prob:risk_cvar_meas} will have an optimal value satisfying $p^*_{ES} \geq P^*$ from Problem \ref{prob:risk_window}.     
    \end{thm}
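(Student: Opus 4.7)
The plan is to mimic the construction used in the proof of Theorem \ref{thm:risk_mean_upper} and augment it with explicit choices for the additional measures $\nu, \hat{\nu} \in \Mp{[p_{\min}, p_{\max}]}$ that encode the \ac{ES} via the domination representation in \eqref{eq:cvar_dom}. Concretely, I would fix an arbitrary feasible point $(t^*, x_0^*) \in [h,T] \times X_0$ of Problem \ref{prob:risk_window} and replicate the assignments of Theorem \ref{thm:risk_mean_upper}:
\begin{equation*}
\mu_0 = \delta_{s=t^*, x=x_0^*}, \quad \mu_\tau = \delta_{t=t^*,s=t^*} \otimes x(t^*), \quad \mu_- = \delta_{s=t^*} \otimes \mu_{[0, t^*-h]}, \quad \mu_+ = \delta_{s=t^*} \otimes \mu_{[t^*-h, t^*]}.
\end{equation*}
These satisfy all constraints of \eqref{eq:risk_cvar_meas} that were already present in \eqref{eq:risk_mean_meas}, namely the flow relation \eqref{eq:risk_cvar_meas_flow}, the probability constraint \eqref{eq:risk_cvar_meas_prob}, the window-mass constraint \eqref{eq:risk_cvar_meas_window}, and the support constraints of \eqref{eq:risk_meas_def}.

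Next, I would interpret the time-windowed random variable in \eqref{eq:risk_window_obj} as the marginalized probability distribution $\psi^* := p_\# \pi^x_\#(\mu_+/h) \in \Mp{[p_{\min}, p_{\max}]}$, which is licit by Assumptions A1 and A2 (so $[p_{\min}, p_{\max}]$ is compact) and is the same interpretation used to derive \eqref{eq:risk_cvar_meas_cvarcon}. Applying the domination-form \ac{LP} \eqref{eq:cvar_dom} to $\psi^*$ yields an attained optimizer $(\nu^*, \hat{\nu}^*)$ with support in $[p_{\min}, p_{\max}]$ that satisfies both $\epsilon \nu^* + \hat{\nu}^* = \psi^*$ and $\inp{1}{\nu^*} = 1$. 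This precisely enforces constraints \eqref{eq:risk_cvar_meas_cvarmass} and \eqref{eq:risk_cvar_meas_cvarcon}, and the identity $\inp{\mathrm{id}_\R}{\nu^*} = \mathrm{ES}_\epsilon\bigl(\tfrac{1}{h}\int_{t^*-h}^{t^*} p(x(t'))\,dt'\bigr)$ holds by Remark \ref{rmk:cvar}. Thus the full tuple $(\mu_0, \mu_\tau, \mu_-, \mu_+, \nu^*, \hat{\nu}^*)$ is feasible for Problem \ref{prob:risk_cvar_meas} with objective value equal to the \ac{ES}-type cost in \eqref{eq:risk_window_obj} evaluated at $(t^*, x_0^*)$.

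Since this construction furnishes a feasible candidate for every choice of $(t^*, x_0^*)$ attaining the same objective, taking a supremum over the original problem's feasible set gives $p^*_{ES} \geq P^*$. The only subtle point, which I would flag explicitly, is matching the interpretation of the random variable in \eqref{eq:risk_window_obj} with the univariate distribution $p_\#\pi^x_\#(\mu_+/h)$ that appears on the right-hand side of \eqref{eq:risk_cvar_meas_cvarcon}; this equivalence is asserted in the paper's discussion following \eqref{eq:risk_window} (``normalized marginalization''), and it is precisely what lets the pairing $\inp{\mathrm{id}_\R}{\nu^*}$ recover the \ac{ES} of the time-windowed average rather than some distinct quantity such as the \ac{ES} of the pathwise integral. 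Beyond that observation, the argument is a direct composition of Theorem \ref{thm:risk_mean_upper} with the finite-dimensional domination \ac{LP} \eqref{eq:cvar_dom}, so no additional machinery is required.
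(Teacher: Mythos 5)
Your proposal is correct and follows essentially the same route as the paper's proof: reuse the measure construction from Theorem \ref{thm:risk_mean_upper}, form the pushforward distribution $p_\#\pi^x_\#(\mu_+/h)$ of the windowed values, and split it into $(\nu^*,\hat{\nu}^*)$ via the domination representation \eqref{eq:cvar_dom} so that $\inp{\mathrm{id}_\R}{\nu^*}$ recovers the \ac{ES}. The only cosmetic difference is that you invoke \eqref{eq:cvar_dom} as an optimization with an attained maximizer while the paper describes the optimal split explicitly (top $\epsilon$ mass rescaled into $\nu^*$, remainder into $\hat{\nu}^*$); these are the same construction.
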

    \begin{proof}
        Let $(t^*, x_0^*) \in [h, T] \times X_0$ be a feasible (but not necessarily optimal) point of Problem \ref{prob:risk_window}. Measures $(\mu_0, \mu_\tau, \mu_+, \mu_-)$ can be constructed from $(t^*, x_0^*)$ to satisfy constraints in \eqref{eq:risk_cvar_meas_flow}-\eqref{eq:risk_cvar_meas_window} subject to the support constraints in \eqref{eq:risk_meas_def}. 
        We now consider the construction of feasible measures $\nu, \hat{\nu}$ from the point $(t^*, x_0^*).$ 
        Define $\xi^* = p_\# (\pi^x_{\#} \mu_{[t^*-h, t^*]}(t, x)/h)$ 
        as the probability distribution formed by the evaluation of $p$ along the graph of $x(t)$ between $[t^*-h, t^*]$. 
        Noting that  univariate probability distribution $\xi^*$ satisfies the support constraint $\xi^* \in \Mp{[p_{\min}, p_{\max}]}$, the distribution can be split into $\nu^*, \hat{\nu^*}$ such that the top $\epsilon$ mass of $\xi^*$ is stored in $(1/\epsilon)\nu^*$ while the lower $(1-\epsilon)$ mass of $\xi^*$ is left to the slack $\hat{\nu}^*$ (expression in \eqref{eq:cvar_dom}). The expectation of such a $\nu^*$ is equal to the \ac{ES} of $p$ in the time-window $[t^*-h, t^*]$. The theorem is proven, given that measures from \eqref{eq:risk_meas_def} and \eqref{eq:risk_cvar_meas_def} can be constructed from any point $(t^*, x_0^*).$
    \end{proof}
    
    \begin{thm}
    \label{thm:no_relaxation_cvar}
    When Assumptions A1-A6 are imposed for the \ac{ES} problem, Problem \ref{prob:risk_cvar_meas} will have no relaxation gap to Problem \ref{prob:risk_window} $(p^*_{ES} = P^*)$.
\end{thm}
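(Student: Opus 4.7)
The plan is to mirror the two-step structure of the proof of Theorem \ref{thm:no_relaxation_mean}, grafting on the handling of the \ac{ES} decomposition measures $(\nu, \hat\nu)$. The upper bound $p^*_{ES} \geq P^*$ is already supplied by Theorem \ref{thm:risk_cvar_upper}, so only the reverse inequality $p^*_{ES} \leq P^*$ needs work. I would take an arbitrary feasible tuple $(\mu_0, \mu_\tau, \mu_+, \mu_-, \nu, \hat\nu)$ of Problem \ref{prob:risk_cvar_meas} and first apply the trajectory-reconstruction argument of Theorem \ref{thm:no_relaxation_mean} (invoking Theorem 3.3 of \cite{cho2002linear}) to the sub-tuple $(\mu_0, \mu_\tau, \mu_- + \mu_+)$. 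This yields a disintegration $\mu_+ = \int \mu_+^{(s, x_0)} \, d\mu_0(s, x_0)$, where each $\mu_+^{(s, x_0)}$ is the windowed occupation measure of an $\hat\Lie$-trajectory launched from $x_0$ with stopping time $s$.

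Next, I would interpret the $(\nu, \hat\nu)$ block. Combining Remark \ref{rmk:cvar} with constraints \eqref{eq:risk_cvar_meas_cvarmass}--\eqref{eq:risk_cvar_meas_cvarcon} and the \ac{ES} representation \eqref{eq:cvar_dom}, the objective satisfies $\inp{\textrm{id}_\R}{\nu} \leq \mathrm{ES}_\epsilon(\xi)$ for the windowed distribution $\xi = p_\#(\pi^x_\# \mu_+ / h) = \int \xi^{(s, x_0)} \, d\mu_0(s, x_0)$, where $\xi^{(s, x_0)} := p_\#(\pi^x_\# \mu_+^{(s, x_0)} / h)$ is exactly the single-initial-condition windowed distribution whose \ac{ES} appears in \eqref{eq:risk_window_obj}. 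In particular $\mathrm{ES}_\epsilon(\xi^{(s, x_0)}) \leq P^*$ for $\mu_0$-almost every $(s, x_0)$.

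The crux, and the step I expect to be the main obstacle, is then the mixture-type inequality
\begin{equation*}
    \mathrm{ES}_\epsilon\!\left(\int \xi^{(s,x_0)} \, d\mu_0\right) \leq \sup_{(s, x_0) \in \textrm{supp}\,\mu_0} \mathrm{ES}_\epsilon(\xi^{(s, x_0)}),
\end{equation*}
which I would establish via Bauer's maximum principle. Under Assumption A1 the feasible set of Problem \ref{prob:risk_cvar_meas} is convex and weak-$*$ compact in the product of bounded Radon measure spaces, so the linear functional $\inp{\textrm{id}_\R}{\nu}$ attains its supremum at an extreme point. Extreme points must carry a Dirac $\mu_0$, since any non-Dirac splitting $\mu_0 = \tfrac{1}{2}\mu_0^{(1)} + \tfrac{1}{2}\mu_0^{(2)}$ with $\mu_0^{(1)} \neq \mu_0^{(2)}$ lifts, via the trajectorial disintegration, to a nontrivial convex decomposition of the full tuple, the $(\nu, \hat\nu)$ block splitting according to the induced mixture $\xi = \tfrac{1}{2}\xi^{(1)} + \tfrac{1}{2}\xi^{(2)}$. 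At an extremal tuple with $\mu_0 = \delta_{(s^*, x_0^*)}$, the mixture collapses to $\xi = \xi^{(s^*, x_0^*)}$, so the LP value is $\mathrm{ES}_\epsilon(\xi^{(s^*, x_0^*)}) \leq P^*$; combined with Theorem \ref{thm:risk_cvar_upper} this yields $p^*_{ES} = P^*$. The delicate point will be verifying the feasibility and distinctness of the lifted splitting of $(\nu, \hat\nu)$, which requires that the ES-optimal decomposition be essentially unique given $\xi$; in the event that this uniqueness argument is hard to carry through cleanly, a fallback is to work directly with the variational form $\mathrm{ES}_\epsilon(\xi) = \min_\lambda[\lambda + (1/\epsilon)\inp{(x-\lambda)_+}{\xi}]$ and exploit the linearity of the inner payoff in $\mu_0$ together with Theorem \ref{thm:no_relaxation_mean} applied to the auxiliary cost $(p(x)-\lambda)_+$ at each $\lambda$, recovering $P^*$ by supremizing over the attained optimizer $\lambda^*$.
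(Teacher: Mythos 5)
Your reduction correctly isolates the step that actually carries the weight of this theorem: passing from the mixture $\xi = \int \xi^{(s,x_0)}\,d\mu_0$ induced by a general (non-Dirac) initial measure back to a single pair $(t^*,x_0^*)$. Unfortunately the mixture inequality you flag as the crux is false. By \eqref{eq:cvar_param}, the map $\xi\mapsto\mathrm{ES}_\epsilon(\xi)$ is a pointwise minimum of functions affine in the law $\xi$, hence \emph{concave} in $\xi$, and concavity can push a mixture strictly above every component. Concretely, take $\epsilon=0.1$, $\xi_1 = 0.99\,\delta_0+0.01\,\delta_{10}$ and $\xi_2=\delta_1$ (both realizable as windowed occupation distributions of deterministic trajectories started from two different points of $X_0$); then $\mathrm{ES}_{0.1}(\xi_1)=\mathrm{ES}_{0.1}(\xi_2)=1$, while $\mathrm{ES}_{0.1}(\tfrac12\xi_1+\tfrac12\xi_2) = (0.005\cdot 10 + 0.095\cdot 1)/0.1 = 1.45$. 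This also defeats the Bauer route: the linear objective $\inp{\mathrm{id}_\R}{\nu}$ does attain its maximum at an extreme point, and every Dirac-$\mu_0$ feasible point has value at most $P^*$, so the counterexample forces some extreme point to carry a non-Dirac $\mu_0$. In other words, the lifted splitting of $(\nu,\hat\nu)$ that you worry about genuinely fails: the optimal $\nu$ for the mixture loads the upper tail non-proportionally across components and cannot be divided into two unit-mass, $(1/\epsilon)$-dominated pieces. The fallback via the variational form fares no better, since exchanging $\sup_{\mu_0}$ and $\min_\lambda$ gives $\sup\min\leq\min\sup$ (the wrong direction), and restoring equality by a minimax theorem merely returns you to the concave functional $\sup_\xi \mathrm{ES}_\epsilon(\xi)$ over the convex set of mixtures, i.e., to $p^*_{ES}$ itself.

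For comparison, the paper's own proof does not attempt this reduction at all: it argues that $(\mu_0,\mu_\tau,\mu_+,\mu_-)$ are supported on trajectories as in Theorem \ref{thm:no_relaxation_mean}, observes that $(\nu,\hat{\nu})$ implement the \ac{ES} of $p_\#(\pi^x_\#\mu_+/h)$ via \eqref{eq:cvar_dom}, and concludes directly. That argument identifies $p^*_{ES}$ with the supremum of $\mathrm{ES}_\epsilon(\xi)$ over achievable mixtures $\xi$, but is silent on why a supremum over randomized $(s,x_0)$ should coincide with the supremum over deterministic $(t^*,x_0^*)$ in \eqref{eq:risk_window}. Your attempt makes that missing step explicit and, in doing so, shows it cannot be supplied by extreme-point or mixture arguments alone; closing it would require either showing the LP optimum is attained at a Dirac $\mu_0$ (which the example above contradicts in general) or reinterpreting Problem \ref{prob:risk_window} as optimizing over distributions of initial conditions and stopping times.
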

\begin{proof}
    Following from the proof of Theorem \ref{thm:no_relaxation_mean}, under Assumptions A1-A6, the measures $(\mu_0, \mu_\tau, \mu_{+}, \mu_{-})$ will be properly supported on the graph of a stochastic process. Because $(\nu, \hat{\nu})$ from \eqref{eq:risk_cvar_meas_def} are the  measures associated with the \ac{ES} of $p_\# (\pi^x_\# \mu_+/h)$ from    
    relation \ref{eq:cvar_dom}, feasible points of Problem \ref{prob:risk_cvar_meas} remain supported on the graph of stochastic processes. No-relaxation-gap is therefore proven to the \ac{ES} objective.
\end{proof}

\begin{prop}
    The \ac{ES} program \eqref{eq:risk_cvar_meas} and mean program \eqref{eq:risk_mean_meas} are related by $p^*_{ES} \geq p^*$ (even when no assumptions from A1-A6 are imposed).
\end{prop}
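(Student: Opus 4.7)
The plan is to show that any feasible tuple of the mean program \eqref{eq:risk_mean_meas} can be extended, without changing the first four measures, to a feasible tuple of the \ac{ES} program \eqref{eq:risk_cvar_meas} whose objective value is at least as large as the mean-program objective. Since both programs are supremization problems sharing the measures $(\mu_0,\mu_\tau,\mu_+,\mu_-)$ and the same support/flow/mass constraints \eqref{eq:risk_mean_meas_flow}--\eqref{eq:risk_mean_meas_window} $\equiv$ \eqref{eq:risk_cvar_meas_flow}--\eqref{eq:risk_cvar_meas_window}, this will immediately yield $p^*_{ES} \geq p^*$.

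First, let $(\mu_0,\mu_\tau,\mu_+,\mu_-)$ be any feasible point of \eqref{eq:risk_mean_meas}. Define the univariate probability distribution $\psi := p_\#\pi^x_\#(\mu_+/h) \in \Mp{[p_{\min},p_{\max}]}$; it has unit mass by constraint \eqref{eq:risk_mean_meas_window}, and its mean equals $\inp{p}{\mu_+}/h$, which is exactly the mean-program objective \eqref{eq:risk_mean_meas_obj}. Next, construct $(\nu,\hat{\nu}) \in \Mp{[p_{\min},p_{\max}]}^2$ by applying the domination-form \ac{ES} decomposition \eqref{eq:cvar_dom} to $\psi$: pick $\nu,\hat{\nu} \geq 0$ with $\epsilon\nu + \hat{\nu} = \psi$, $\inp{1}{\nu}=1$, and $\inp{\mathrm{id}_\R}{\nu} = \mathrm{ES}_\epsilon(\psi)$. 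Such a pair exists by Theorem 5.3 of \cite{miller2023chancepeak}. Then constraints \eqref{eq:risk_cvar_meas_cvarmass}--\eqref{eq:risk_cvar_meas_cvarcon} and the support conditions \eqref{eq:risk_meas_def_cvar} are satisfied, so the extended tuple is feasible for \eqref{eq:risk_cvar_meas}.

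It remains to compare objective values. Because the \ac{ES} is a coherent risk measure that dominates the mean (a standard consequence of \eqref{eq:cvar_param}, as $\mathrm{ES}_\epsilon(V) = \inf_\lambda \lambda + \tfrac{1}{\epsilon}\mathbb{E}[(V-\lambda)_+] \geq \mathbb{E}[V]$ obtained by letting $\lambda \to -\infty$ or directly since the conditional expectation above a quantile exceeds the full expectation), we have
\begin{align*}
\inp{\mathrm{id}_\R}{\nu} \;=\; \mathrm{ES}_\epsilon(\psi) \;\geq\; \int p_{\min}^{p_{\max}} z\, d\psi(z) \;=\; \inp{p}{\mu_+}/h.
\end{align*}
Taking a supremum over feasible points of \eqref{eq:risk_mean_meas} on both sides, the right-hand side approaches $p^*$ while the left-hand side is bounded above by $p^*_{ES}$, giving $p^*_{ES} \geq p^*$.

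There is no substantive obstacle in this argument; the only item requiring any care is verifying the inequality $\mathrm{ES}_\epsilon \geq \mathrm{mean}$, which follows from coherence (combine monotonicity with the trivial fact that the supremum in \eqref{eq:coherent_abscont} over the \ac{ES} constraint set $C$ includes the choice $\zeta=\psi$ when $\epsilon=1$ and dominates it for $\epsilon<1$ by a direct rearrangement argument). Note that no compactness, regularity, or no-relaxation-gap assumptions A1--A6 are needed: the construction is purely measure-theoretic and only uses the definitions of the two programs.
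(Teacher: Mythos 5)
Your argument is correct, and it takes a genuinely different route from the paper. The paper's proof observes that the mean program is the $\epsilon=1$ instance of the \ac{ES} program (where \eqref{eq:risk_cvar_meas_cvarcon} forces $\hat{\nu}=0$ and $\nu=p_\#\pi^x_\#(\mu_+/h)$) and then argues, somewhat informally, that decreasing $\epsilon$ only lets $\nu$ shed lower-valued mass, so the supremum can only increase. You instead fix a feasible point of \eqref{eq:risk_mean_meas}, push $\mu_+/h$ forward to the univariate probability measure $\psi$, extend it to a feasible point of \eqref{eq:risk_cvar_meas} via the domination decomposition \eqref{eq:cvar_dom}, and invoke the scalar inequality $\mathrm{ES}_\epsilon(\psi)\geq\E[\psi]$. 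This is more explicit than the paper's monotonicity-in-$\epsilon$ claim, and it cleanly separates the lifting step from the risk-measure comparison; in fact it can be simplified further, since the choice $\nu=\psi$, $\hat{\nu}=(1-\epsilon)\psi$ is already feasible for \eqref{eq:risk_cvar_meas_cvarmass}--\eqref{eq:risk_cvar_meas_cvarcon} and reproduces the mean objective exactly, so you do not even need the optimal decomposition or the inequality $\mathrm{ES}_\epsilon\geq\E$. One small correction: your parenthetical ``obtained by letting $\lambda\to-\infty$'' is not a valid justification, since for $\epsilon<1$ the expression $\lambda+\tfrac{1}{\epsilon}\E[(V-\lambda)_+]$ diverges to $+\infty$ as $\lambda\to-\infty$; the correct pointwise bound is $\lambda+\tfrac{1}{\epsilon}\E[(V-\lambda)_+]\geq\lambda+\E[(V-\lambda)_+]\geq\lambda+\E[V-\lambda]=\E[V]$ for every $\lambda$, which is essentially the ``direct'' alternative you also state, so the conclusion stands.
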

\begin{proof}
    The mean is an instance of \ac{ES} with parameter $\epsilon=1$. Enforcing $\epsilon=1$ in constraint \eqref{eq:risk_cvar_meas_cvarcon} ensures that $\hat{\nu} = 0$ and $\nu = p_\# \pi^x_\#(\mu_+/h)$, in which case $\inp{\textrm{id}_\R}{\nu} = \inp{p}{\mu_+}/h$. In the $\epsilon=1$ case, the objectives values of Problems \ref{prob:risk_cvar_meas} and \ref{prob:risk_mean_meas} are equal.

    When $\epsilon<1$, the feasible $\nu$ measure is allowed to exclude lower-$p$-valued mass from $p_\# \pi^x_\#(\mu_+/h)$. In the $\epsilon <1$ case, the objectives are therefore related as $p^*_{ES} \geq p^*$.
\end{proof}


\subsection{Function Program}

The dual \ac{LP} for \eqref{eq:risk_cvar_meas} will have a scalar variable $\beta$ and a univariate function variable $w$ as well as the $(v, \gamma, \xi)$ from \eqref{eq:risk_mean_meas}. The statement of the continuous-function \ac{LP} \ac{ES}-type time-windowed risk analysis:
\begin{prob}
\label{prob:risk_cvar_cont}
Find an auxiliary function $v$, a univariate function $w$, and scalars $(\gamma, \xi, \beta)$ to infimize:
\begin{subequations}
\label{eq:risk_cvar_cont}
\begin{align}
    d^*_{ES} 
    = & \inf_{v, \ \gamma, \ \xi} \quad \gamma + h \xi +\beta & \\
   \textrm{s.t.} \qquad   & \forall x \in X_0: \nonumber \\
   & \qquad \gamma \geq  v(s,0, x)   \label{eq:risk_cvar_cont_init}\\
    & \forall (s, x) \in [h, T] \times X:  \nonumber \\
    &\qquad  v(s, s, x) \geq 0 & & \label{eq:risk_cvar_cont_term}\\
    &\forall (s, t, x) \in \Omega_+ \times X: \nonumber \\
    & \qquad \hat{\Lie}v(s, t, x) +w(p(x))/h\leq  \xi& \label{eq:risk_cvar_cont_occ_fw}  \\
    & \forall (s, t, x) \in \Omega_- \times X: \nonumber \\
    & \qquad \hat{\Lie}v(s, t, x) \leq  0  & & \label{eq:risk_cvar_cont_occ_bw} \\
    & \forall q \in [p_{\min}, p_{\max}] \nonumber \\
    & \qquad \epsilon w(q) + \beta \geq 0 \label{eq:risk_cvar_cont_occ_cvar}\\
    &v \in \cs([h, T] \times [0, T]\times X) & \\
    &w \in C_+([p_{\min}, p_{\max}]) 
    \label{eq:risk_cvar_cont_poly} \\
    & \gamma, \ \xi, \ \beta \in \R.
\end{align}
\end{subequations} 
\end{prob}

\begin{thm}
\label{thm:duality_cvar}
Under assumptions A1-A6, strong duality ($p^*_{ES} = d^*_{ES}$) will hold between \eqref{eq:risk_cvar_meas} and \eqref{eq:risk_cvar_cont}.
\end{thm}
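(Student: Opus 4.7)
The plan is to mirror the proof of Theorem \ref{thm:duality_mean} (Appendix \ref{app:duality_mean}) and extend it to accommodate the two additional measure variables $\nu, \hat\nu$ on the compact interval $[p_{\min}, p_{\max}]$ and the two additional linear equality constraints \eqref{eq:risk_cvar_meas_cvarmass}-\eqref{eq:risk_cvar_meas_cvarcon}. The guiding observation is that under A1-A2 the interval $[p_{\min}, p_{\max}]$ is compact, so the added primal cone $\Mp{[p_{\min},p_{\max}]}\times\Mp{[p_{\min},p_{\max}]}$ is weak-$*$ closed, the added primal masses are bounded, and the added dual variables $(w, \beta)$ live in $C_+([p_{\min},p_{\max}]) \times \R$. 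This lets us graft the CVaR block onto the mean-side duality framework with only finite-dimensional and compactly supported infinite-dimensional additions.

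First I would establish weak duality ($d^*_{ES} \geq p^*_{ES}$) by direct pairing. Take arbitrary primal- and dual-feasible tuples. Pair $v$ with the flow relation \eqref{eq:risk_cvar_meas_flow} to obtain
\begin{equation*}
\inp{v}{\mu_\tau} - \inp{v(s,0,x)}{\mu_0} \;=\; \inp{\hat{\Lie}v}{\mu_+ + \mu_-}.
\end{equation*}
Integrating \eqref{eq:risk_cvar_cont_occ_fw} against $\mu_+$ and \eqref{eq:risk_cvar_cont_occ_bw} against $\mu_-$, then substituting via the displayed identity and using \eqref{eq:risk_cvar_cont_init}, \eqref{eq:risk_cvar_cont_term} together with the normalizations \eqref{eq:risk_cvar_meas_prob}, \eqref{eq:risk_cvar_meas_window}, yields $(1/h)\inp{w\circ p}{\mu_+}\leq \gamma + h\xi$. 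Applying \eqref{eq:risk_cvar_meas_cvarcon} to rewrite $(1/h)\inp{w\circ p}{\mu_+} = \epsilon\inp{w}{\nu} + \inp{w}{\hat\nu}$, and using $w\geq 0$, $\hat\nu\geq 0$, finally combining with \eqref{eq:risk_cvar_cont_occ_cvar} paired against $\nu$ (which gives $\beta \geq -\epsilon\inp{w}{\nu}$) and \eqref{eq:risk_cvar_meas_cvarmass}, closes the chain to $\gamma + h\xi + \beta \geq \inp{\textrm{id}_\R}{\nu}$.

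Second, for strong duality and attainment I would invoke the same abstract infinite-dimensional LP duality result used for the mean case (e.g.\ Anderson-Nash Theorem 3.10, or the Alaoglu-plus-separation apparatus in \cite{cho2002linear, tobasco2018optimal}). The primal feasible set is weak-$*$ compact: all support sets are compact by A1 and compactness of $[p_{\min}, p_{\max}]$, and every mass is a priori bounded ($\inp{1}{\mu_0}=1$, $\inp{1}{\mu_+}=h$, $\inp{1}{\nu}=1$, $\inp{1}{\hat\nu}=1-\epsilon$ by pairing \eqref{eq:risk_cvar_meas_cvarcon} with the constant $1$, and $\inp{1}{\mu_-}\le T-h$ by the time-occupation structure inherited from the flow equation paired against the indicator of $\Omega_-$). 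The continuous objective $\inp{\textrm{id}_\R}{\nu}$ attains its supremum over this compact set, so $p^*_{ES}$ is attained and finite. The main obstacle, as in the mean case, is verifying weak-$*$ closedness of the image of the primal cone under the constraint map so that the Hahn-Banach separation step in the duality proof applies; this is where A4-A6 and the Cho-Stockbridge-type argument of \cite{cho2002linear} are essential. Since the new CVaR block adds only a compactly supported measure cone together with continuous linear constraints (and a single scalar equality), it does not disturb the closedness of the combined primal cone, and the mean-case closedness argument transfers verbatim. Once closedness is established, abstract LP duality gives $p^*_{ES} = d^*_{ES}$ with dual attainment when the dual value is finite, completing the proof.
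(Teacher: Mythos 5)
Your proposal follows essentially the same route as the paper's Appendix B: graft the two compactly supported measures on $[p_{\min},p_{\max}]$ and the dual pair $(w,\beta)$ onto the mean-case conic framework, verify compact supports, uniform mass bounds (including $\inp{1}{\nu}=1$ and $\inp{1}{\hat{\nu}}=1-\epsilon$, which is exactly the paper's Lemma on boundedness), continuity of the problem data, and primal feasibility, and then invoke the abstract closed-image/separation duality theorem. One small caution on your weak-duality chain: the step ``$\beta \geq -\epsilon\inp{w}{\nu}$'' does not by itself close the chain to $\gamma + h\xi + \beta \geq \inp{\textrm{id}_\R}{\nu}$ --- for that you need the dual constraint in the form $\epsilon w(q) + \beta \geq q$ (which is what the adjoint operator in the paper's appendix actually encodes, the displayed constraint \eqref{eq:risk_cvar_cont_occ_cvar} apparently dropping the $q$), so that pairing against the probability measure $\nu$ yields $\beta + \epsilon\inp{w}{\nu} \geq \inp{\textrm{id}_\R}{\nu}$; also, you should state explicitly that a primal-feasible point exists (the paper constructs one via Theorem \ref{thm:risk_cvar_upper}) rather than only that the feasible set is weak-$*$ compact.
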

\begin{proof}
    See Appendix \ref{app:duality_cvar}.
\end{proof}
\section{Time-Windowed Average Finite Truncation}
\label{sec:risk_lmi}
\old{
The infinite-dimensional \acp{LP} presented in Sections \ref{sec:mean_lp} and \ref{sec:cvar_lp} must be truncated into finite-dimensional programs in order to be computationally tractable. This section will first present the moment-\ac{SOS} hierarchy of \acp{SDP} \cite{lasserre2009moments}, and will then apply this hierarchy towards grid-free truncation of Problems \ref{prob:risk_mean_meas} and \ref{prob:risk_cvar_meas}.

\subsection{Moment-SOS Background}

Problem~\eqref{eq:risk_mean_meas}  involves infinite-dimensional decision variables (namely Radon measures) as well as infinite-dimensional linear constraints of the form $\mathcal{A} \, \mu = b$ such as the Kolmogorov equation~\eqref{eq:risk_mean_meas_flow}. To cope with such infinite dimension, the moment-SOS framework consists of three steps: \textit{(i)} testing infinite dimensional constraints against polynomials, \textit{(ii)} representing Borel measures $\mu \in \Mp{X}$ with their moments $y_\alpha = \inp{x^\alpha}{\mu}$ and \textit{(iii)} truncating the degree of both moment variables and constraints to a finite value $k < \infty$. Step \textit{(i)} is a direct application of the Stone-Weierstra\ss{} theorem, while step \textit{(ii)} is made possible by Putinar's Positivstellensatz~\cite[Lemma 3]{putinar1993compact}, which characterizes moment sequences among multi-indexed real sequences:

\begin{lem} \label{lem:Psatz}
    Let $g_1,\ldots,g_m \in \R[x]$ define a \ac{BSA} set
    $$ X = \{x \in \R^n \; | \; g_1(x) \geq 0 , \ldots, g_m(x) \geq 0\}. $$
    Let $y = (y_\alpha)_{\alpha \in \N^n} \in \R^{\N^n}$ be a multi-indexed real sequence with associated Riesz linear functional
    $$ L_y = \left\{\begin{array}{ccc}
         \R[x] & \longrightarrow & \R \\
         \sum_{\alpha} p_\alpha \, x^\alpha & \longmapsto & \sum_\alpha p_\alpha \, y_\alpha. 
    \end{array}\right. $$
    Then, assuming the Archimedean property, i.e. that $g_m(x) = R^2 - x^\top x$ (which is equivalent to compactness of $X$, up to adding a redundant ball constraint to its description), there exists a Radon measure $\mu\in\Mp{X}$ such that
    $$ \forall \alpha \in \N^n, \quad y_\alpha = \inp{ x^\alpha }{ \mu } $$
    if and only if the following quadratic forms are nonnegative:
    \begin{subequations} \label{eq:PSatz}
        \begin{align}
            Q_y & = p \longmapsto L_y(p^2) \geq 0 , \label{eq:PSatz_moment}\\ Q_{g_iy} & = p\longmapsto L_y(g_i \, p^2) \geq 0. \label{eq:PSatz_local} \\ \notag
        \end{align}
    \end{subequations}
\end{lem}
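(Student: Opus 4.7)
The plan is to prove the two directions separately, with the forward direction being elementary and the converse direction being the deep content of Putinar's 1993 theorem.

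\textbf{Necessity.} Suppose there exists $\mu \in \Mp{X}$ with $y_\alpha = \inp{x^\alpha}{\mu}$ for all $\alpha$. For any polynomial $p \in \R[x]$, the Riesz functional satisfies $L_y(p^2) = \int_X p(x)^2 \, d\mu(x) \geq 0$ because the integrand is pointwise nonnegative. Similarly, since $g_i \geq 0$ on $\supp(\mu) \subseteq X$, we get $L_y(g_i \, p^2) = \int_X g_i(x) p(x)^2 \, d\mu(x) \geq 0$. This shows \eqref{eq:PSatz_moment} and \eqref{eq:PSatz_local} automatically.

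\textbf{Sufficiency.} This is where the real work lies, and I would invoke Putinar's Positivstellensatz as the key algebraic tool. First, under the Archimedean hypothesis (encoded by $g_m(x) = R^2 - x^\top x$), the quadratic module $M(g) = \{ \sigma_0 + \sum_{i=1}^m \sigma_i g_i : \sigma_i \text{ SOS}\}$ is Archimedean, meaning that for every $p \in \R[x]$ there exists $N \in \N$ such that $N \pm p \in M(g)$. The conditions \eqref{eq:PSatz_moment}--\eqref{eq:PSatz_local} on $L_y$ state precisely that $L_y$ is nonnegative on $M(g)$. Putinar's theorem asserts that any polynomial strictly positive on $X$ belongs to $M(g)$, so $L_y$ is nonnegative on every polynomial that is strictly positive on $X$.

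Next, I would extend $L_y$ from $\R[x]$ to a positive linear functional on $C(X)$. The Archimedean property yields a uniform bound $|L_y(p)| \leq L_y(N) \cdot \|p\|_{C(X)}$ (with $N$ chosen so that $N \pm p/\|p\|_{C(X)} \in M(g)$), hence $L_y$ is continuous for the supremum norm on the dense subspace $\R[x] \subset C(X)$ (density being Stone-Weierstra\ss{} on the compact set $X$). The continuous extension $\tilde{L}_y : C(X) \to \R$ inherits nonnegativity on $C_+(X)$ by polynomial approximation of nonnegative continuous functions from below via the Archimedean module. Applying Riesz-Markov, there exists a unique $\mu \in \Mp{X}$ with $\tilde{L}_y(f) = \inp{f}{\mu}$, and specializing to $f(x) = x^\alpha$ recovers $y_\alpha = \inp{x^\alpha}{\mu}$.

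The main obstacle is the Archimedean density step: one must verify that Putinar's representation theorem applies, which relies on a Hahn-Banach style separation argument in the space of polynomials modulo $M(g)$. I would not reprove this here but cite \cite{putinar1993compact} directly; the remainder of the argument (positive extension + Riesz-Markov) is then routine.
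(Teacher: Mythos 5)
The paper does not actually prove this lemma; it is stated as a citation to \cite[Lemma 3]{putinar1993compact}, so there is no in-paper argument to compare against. Your proposal is a correct sketch of the standard proof of that cited result: the necessity direction is exactly the elementary integration argument, and the sufficiency direction correctly reduces to Putinar's Positivstellensatz followed by a Haviland-type extension and Riesz--Markov, which is how the result is proved in the literature. One small point worth tightening: for the continuity bound $|L_y(p)| \leq C\,\|p\|_{C(X)}$ you need the constant to be uniform in $p$; this follows because for any $\varepsilon>0$ the polynomial $(1+\varepsilon)\|p\|_{C(X)} \pm p$ is strictly positive on $X$ and hence lies in $M(g)$ by Putinar, giving $|L_y(p)| \leq (1+\varepsilon)\, y_0\, \|p\|_{C(X)}$ with the uniform constant $y_0 = L_y(1)$ --- your phrasing ``$N$ chosen so that $N \pm p/\|p\|_{C(X)} \in M(g)$'' leaves open that $N$ might depend on $p$, which would not immediately yield boundedness. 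With that clarification the argument is complete modulo the citation, which is the same level of deferral the paper itself adopts.
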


Notice that when the conditions of Lemma~\ref{lem:Psatz} are met, it holds that
$$ \forall p \in \R[x], \quad L_y(p) = \inp{p}{\mu}.$$
Hence, Lemma~\ref{lem:Psatz} means that it is possible to replace Radon measures in~\eqref{eq:risk_mean_meas} with real sequences $y$ complemented with appropriate positivity constraints on $(Q_y, Q_{g,y})$.

Step \textit{(iii)} truncates infinite dimensional moment sequences to finite size moment vectors. For a degree $k \in \N$ and a polynomial $g \in \R[x]$, let $M_k(g y)$ be the symmetric square matrix of size $\binom{n+k-\ceil{\deg g/2}}{n}$ defined by
\begin{equation}
    M_k(g y)_{\alpha, \beta} = L_y(g(x) \, x^{\alpha+\beta}) = \textstyle\sum_{\gamma} g_{\gamma} y_{\alpha+ \beta + \gamma}.
\end{equation}
The matrix $M_k(g y)$ is equal to the top corner of the inifinite-dimensional matrix representing the quadratic form $Q_{g  y}$ in the basis of monomials. The degree-$k$ truncation of a measure \ac{LP} in step (iii) involves replacing each measure variable by a sequence of (pseudo)-moments $y$ up to degree $2k$, imposing that $M_k(y)$ and $M_k(g_i y)$ are each \ac{PSD} matrices, and replacing linear constraints in measures $\mathcal{A} \mu = b$ with a finite number of constraints in the pseudo-moments (e.g. $L_y(\inp{x^\alpha}{b-\mathcal A \, \mu})=0$, $\forall |\alpha| \leq k$). Each degree-$k$ truncation may be solved by \ac{SDP} algorithms, and the process of increasing $k \rightarrow \infty$ to achieve better bounds is the moment-\ac{SOS} hierarchy. 
}
\subsection{Peak Time-Windowed Mean Moment-SOS Programs}

In order to apply the Moment-\ac{SOS} hierarchy, assumptions of polynomial structure must be imposed:
\begin{itemize}
    \item[A7] The function $p(x)$ is polynomial.
    \item[A8] The generator $\Lie$ sends polynomials to polynomials ($\forall z \in \R[t, x], \ \Lie \in \R[t, x]$).
    \item[A9] The states sets $X_0, X$ have an Archimedean \ac{BSA} representation:
    \begin{align*}
        X & = \{x \in \R^n \; | \; g_1(x) \geq 0, \ldots, g_m(x) \geq 0 \} \\
        X_0 & = \{x \in \R^n \; | \; g_{01}(x) \geq 0 , \ldots, g_{0\ell}(x) \geq 0\}.
    \end{align*}
\end{itemize}

\old{
To each degree $k \in \N$  and generator $\Lie$, the associated  dynamics degree $\tilde{k} \geq k$ can be defined as 
\begin{align}
    \tilde{k} = \left\lceil\max_{v \in \R_{2k}[t, x]} \frac{\deg(\Lie v)}{2}\right\rceil.
\end{align}
In addition, for $y = (y_\alpha)_{|\alpha| \leq 2k} \in \N^n_{2k}$ and $g \in \R_{2k}[x]$, let $\hat{k} = k - \ceil{\nicefrac{\deg(g)}{2}}$ and define the localizing matrix $M_k(g\, y)$ as the matrix representation of $Q_{g y}$ in a basis of $\R_{\hat{k}}[x]$.

We define the following polynomials describing $[h,T]$ and $\Omega_\pm$:
\begin{align*}
    g_h(s) & = (T-s)(s-h) \\
    g_+(s,t) & = (s-t)(t-s+h) \\
    g_-(s,t) & = \begin{cases}
        (s-t-h)t & \textrm{Continuous-Time} \\
        (s-t-h-\Delta t) t & \textrm{Discrete-Time}
    \end{cases}
\end{align*}
so that it holds
\begin{align*}
    [h,T] & = \{s \in \R \; | \; g_h(s) \geq 0\} \\
    \Omega_+ & = \{(s,t) \in \R^2 \; | \; g_h(s) \geq 0, g_+(s,t) \geq 0\} \\
    \Omega_- & = \{(s,t) \in \R^2 \; | \; g_h(s) \geq 0, g_-(s,t) \geq 0\}.
\end{align*}

The degree-$k$ moment truncations of \eqref{eq:risk_mean_meas} from Problem \ref{prob:risk_mean_meas} will be posed in terms by forming finite-vector pseudomoment sequences $y$ as $(\mu_0, \mu_\tau, \mu_+, \mu_-) \rightarrow (y^0, y^\tau, y^+, y^-)$. The restriction of the Liouville relation in \eqref{eq:risk_mean_meas_flow} as parameterized by $\alpha \in \N, \ \beta \in \N, \gamma \in \N^n$ is:
\begin{align}
    \textrm{Liou}_{\alpha \beta \gamma} = L_{y^\tau}&(t^{\alpha+\beta} x^\gamma) - L_{y^0}(s^\alpha \delta_{\beta=0} x^\gamma) \nonumber \\
    &- L_{y^+ + y^-}(\hat{\Lie}(s^\alpha t^\beta x^\gamma)). \label{eq:liou_mom}
\end{align}

The degree-$k$ truncation of the mean-type Problem \ref{prob:risk_mean_meas} is
\begin{prob} \label{prob:risk_mean_mom}
    Find pseudo-moment sequences $y^0,y^\tau \in \R^{\N^{n+1}_{2k}}$, $y^+,y^- \in \R^{\N^{n+2}_{2k}}$ to maximize
    \begin{subequations} \label{eq:risk_mean_mom}
        \begin{align}
            p^\star_k = &\ \max\limits_{y^0,y^\tau,y^+,y^-} L_{y^+}(p)/h \label{eq:risk_mean_mom_obj} \\
  \textrm{s.t.} \   & \textrm{Liou}_{\alpha \beta \gamma}(y^0, y^\tau, y^+, y^-) = 0 \label{eq:risk_mean_mom_flow}\\
  & \quad \forall (\alpha, \beta, \gamma) \in \N^n \notag \\
    & L_{y^0}(1) = y_{0}^0 = 1 \label{eq:risk_mean_mom_prob}\\
    & L_{y^+}(1) = y_{0}^+ = h \label{eq:risk_mean_mom_window} \\
    & M_k(1 \ y^\bullet) \succeq 0, \quad M_k(g_h y^\bullet) \succeq 0, \label{eq:risk_mean_mom_lmi} \\
    & \quad \forall \bullet \in \{0,\tau,+,-\} \notag \\
    & M_k(g_- y^-) \succeq 0, \quad M_k(g_+ y^+) \succeq 0. \label{eq:risk_mean_mom_omega}
        \end{align}
    \end{subequations}
\end{prob}

The degree-truncated Liouville relation in \eqref{eq:risk_mean_mom_flow} is defined by $\binom{n+2+2\Tilde{k}}{2\Tilde{k}} < \infty$ scalar linear equality constraints.
Constraints~\eqref{eq:risk_mean_mom_prob} and~\eqref{eq:risk_mean_mom_window} are scalar linear equality constraints, and the \ac{LMI} 
constraints~\eqref{eq:risk_mean_mom_lmi},~\eqref{eq:risk_mean_mom_omega} 
have size at most $\binom{n+2+\tilde{k}}{\tilde{k}} < \infty$. 
The finite-dimensional Problem~\ref{prob:risk_mean_mom} 
may therefore be solved by \ac{SDP} optimization methods, such as interior-point programs.

}
The degree-$k$ truncation to the \ac{ES}-type problem \ref{prob:risk_cvar_meas} requires the following  polynomial:
\begin{align}
    g_p(q) &= (q-p_{\min})(p_{\max} - q).    
\end{align}
Pseudo-moment sequences $y^\nu, {y}^{\hat{\nu}}$ will be used to describe the measures $\nu$ and $\hat{\nu}$ respectively. The following operator can be defined to implement constraint \eqref{eq:risk_cvar_meas_cvarcon} for each $\ell \in \N$:
\begin{align}
    \mathcal{E}_\ell(y^+, y^{\nu}, y^{\hat{\nu}}) = L_{y^+}(p(x)^\ell)/h - \epsilon L_{y^\nu}(q^\ell) - \epsilon L_{y^{\hat{\nu}}}(q^\ell).
\end{align}

Defining the \ac{ES} degree $\delta$ as as $\delta = \lfloor \tilde{k}/\deg p \rfloor$,
the statement of the degree-$k$ truncation Problem \ref{prob:risk_cvar_meas} is:

\begin{prob} \label{prob:risk_cvar_mom}
    Find pseudo-moment sequences $y^0,y^\tau \in \R^{\N^{n+1}_{2k}}$, $y^+,y^- \in \R^{\N^{n+2}_{2\delta }}$, $y^\nu, y^{\hat{\nu}} \in \R^{\N^{1}_{2 \delta}}$ to maximize
    \begin{subequations} \label{eq:risk_cvar_mom}
        \begin{align}
            p^\star_{ES, k} = &\ \max\limits_{y^0,y^\tau,y^+,y^-, y^{\nu}, y^{\hat{\nu}}} L_{y^+}(q) \label{eq:risk_cvar_mom_obj} \\
  \textrm{s.t.} \quad  &   \textrm{Constraints \eqref{eq:risk_mean_mom_flow}-\eqref{eq:risk_mean_mom_omega}} \label{eq:risk_cvar_mom_inherit} \\
  & \mathcal{E}_{\ell}(y^+, y^{\nu}, y^{\hat{\nu}}) = 0 \label{eq:risk_mean_mom_objraise} \quad \forall \ell \in \N\\
    & L_{y^{\nu}}(1) = y_{0}^\nu = 1 \label{eq:risk_cvar_mom_prob}\\
    & M_\delta(1 \ y^\bullet) \succeq 0, \quad M_\delta(g_p y^\bullet) \succeq 0, \label{eq:risk_cvar_mom_lmi} \\
    & \quad \forall \bullet \in \{\nu, \hat{\nu}\}. 
        \end{align}
    \end{subequations}
\end{prob}
 
\begin{thm}
\label{thm:convergence_mean}
    When Assumptions A1-A6 are imposed, 
 the objectives of \eqref{eq:risk_mean_mom} will satisfy $p^*_k \geq p^*_{k+1} \geq p^*_{k+2} \ldots $ as $\lim_{k\rightarrow \infty} p^*_k = p^*$ from Problem \ref{prob:risk_mean_meas}.  Similarly, solutions to Problem \ref{prob:risk_cvar_mom} will satisfy $\lim_{k \rightarrow \infty} p^*_{ES, k} = p^*_{ES}$ from Problem \ref{prob:risk_cvar_meas}.
\end{thm}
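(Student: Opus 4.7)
The plan is to follow the standard three-step convergence argument for the moment-SOS hierarchy: upper-bounding, monotonicity, and extraction of a limit measure. I handle the mean case first and then indicate the minor additions needed for the \ac{ES} case.

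First, I would show that Problem~\ref{prob:risk_mean_mom} is a relaxation of Problem~\ref{prob:risk_mean_meas}, i.e.\ $p^*_k \geq p^*$. Given any feasible tuple $(\mu_0, \mu_\tau, \mu_+, \mu_-)$ of \eqref{eq:risk_mean_meas}, define the truncated moment sequences by $y^\bullet_\alpha = \inp{x^\alpha}{\mu_\bullet}$ up to degree $2k$. By construction the Riesz functionals $L_{y^\bullet}$ agree with $\inp{\cdot}{\mu_\bullet}$ on $\R_{2k}[\cdot]$, so the truncated Liouville constraints \eqref{eq:risk_mean_mom_flow}, the mass constraints \eqref{eq:risk_mean_mom_prob}--\eqref{eq:risk_mean_mom_window}, and the localizing \ac{LMI}s \eqref{eq:risk_mean_mom_lmi}--\eqref{eq:risk_mean_mom_omega} follow from Lemma~\ref{lem:Psatz} applied to the \ac{BSA} descriptions of $X_0, X, [h,T], \Omega_\pm$. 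The objectives match exactly, giving $p^*_k \geq p^*$.

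Next, monotonicity $p^*_k \geq p^*_{k+1}$ follows immediately: any degree-$(k+1)$ feasible solution restricts to a degree-$k$ feasible solution by simply discarding the higher-order pseudo-moments. The degree-$(k+1)$ problem has more \ac{LMI} blocks and more Liouville equalities active, hence a smaller feasible set and a smaller maximum. Combined with the lower bound $p^*_k \geq p^*$, the sequence $\{p^*_k\}$ is monotone decreasing and bounded below, so it converges to some $p^*_\infty \geq p^*$.

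The substantive step is the reverse inequality $p^*_\infty \leq p^*$. I would extract an optimal (or near-optimal) pseudo-moment tuple $(y^{0,k}, y^{\tau,k}, y^{+,k}, y^{-,k})$ for each $k$ and show pointwise convergence along a subsequence. Compactness of the supports together with the Archimedean condition from A9 yields uniform bounds on each individual pseudo-moment entry (a standard consequence of the localizing \ac{LMI} on the ball constraint: $|y^\bullet_\alpha| \leq R^{|\alpha|}\,y^\bullet_0$ up to a binomial factor, cf.\ \cite{lasserre2009moments}). Diagonal extraction then yields an infinite pseudo-moment sequence $y^\bullet$ for each measure satisfying $M_k(g_j y^\bullet) \succeq 0$ for every $k$ and every localizing polynomial. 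Putinar's Positivstellensatz (Lemma~\ref{lem:Psatz}) therefore produces genuine Radon measures $\mu_0, \mu_\tau, \mu_+, \mu_-$ with the correct supports. Passing to the limit in \eqref{eq:risk_mean_mom_flow}--\eqref{eq:risk_mean_mom_window} tests the Liouville and mass constraints against each monomial. Assumptions A4--A5 together with Stone-Weierstra\ss{} give that polynomials are dense (in supremum norm on the compact supports) in $\cs$, so the limiting tuple is feasible for Problem~\ref{prob:risk_mean_meas} with objective $\inp{p}{\mu_+}/h = \lim_k L_{y^{+,k}}(p)/h = p^*_\infty$. Hence $p^* \geq p^*_\infty$, closing the loop.

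The \ac{ES} case runs identically once the additional pseudo-moment sequences $y^\nu, y^{\hat\nu}$ on $[p_{\min}, p_{\max}]$ are included. Their \ac{LMI}s \eqref{eq:risk_cvar_mom_lmi} combined with Putinar produce limit measures $\nu, \hat\nu$ supported on the compact interval $[p_{\min}, p_{\max}]$. The truncated pushforward conditions $\mathcal{E}_\ell(y^+, y^\nu, y^{\hat\nu}) = 0$ for every $\ell \in \N$ enforce, in the limit, $\inp{q^\ell}{\epsilon \nu + \hat\nu} = \inp{p(x)^\ell}{\mu_+}/h$, which by Stone-Weierstra\ss{} on $[p_{\min}, p_{\max}]$ is equivalent to the measure-level constraint \eqref{eq:risk_cvar_meas_cvarcon}. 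The objective $L_{y^\nu}(q) \to \inp{\mathrm{id}_\R}{\nu}$ is continuous in the pseudo-moments, so the same argument yields $\lim_k p^*_{ES,k} = p^*_{ES}$.

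The main obstacle is the uniform boundedness and compactness argument needed to justify extraction of a convergent subsequence of pseudo-moments, particularly on the augmented time-window supports $\Omega_\pm$, which are not themselves Archimedean a priori but become so once intersected with the compact $[0,T]\times[0,T]\times X$ outer bound from A1; this must be encoded explicitly in the \ac{BSA} description used in the moment relaxation.
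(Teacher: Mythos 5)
Your overall architecture (relaxation, monotonicity by truncation, diagonal extraction of pseudo-moments, Putinar to recover limit measures, passage to the limit in the tested constraints) is the standard unrolling of the convergence machinery that the paper compresses into a citation of Corollary 8 of \cite{tacchi2022convergence}, so the route is essentially the same. But there is a genuine gap at the one point where the generic machinery does not apply automatically and where the paper spends all of its effort. Your compactness/extraction step rests on the bound $|y^\bullet_\alpha| \leq R^{|\alpha|}\, y^\bullet_0$, which is only useful if the zeroth pseudo-moments $y^\bullet_0$ are uniformly bounded over $k$. For $y^0$, $y^+$, and $y^\tau$ this is immediate ($y^0_0 = 1$, $y^+_0 = h$, and $y^\tau_0 = y^0_0$ from the Liouville constraint at $\alpha=\beta=\gamma=0$). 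For $y^-$, however, no mass constraint appears anywhere in Problem~\ref{prob:risk_mean_mom}, and at the pseudo-moment level you cannot argue as in the measure setting that ``$\inp{t}{\mu_\tau} \in [h,T]$'' --- that inequality must itself be extracted from the \ac{LMI} constraints. The paper's Lemma~\ref{lem:mean_mass_mom} does exactly this: it tests the Liouville constraint at $\beta=1$ to get $y^\tau_{1,0} = h + y^-_0$, and then combines the localizing constraint $M_1(g_h y^\tau) \succeq 0$ with a specific quadratic-form evaluation of $M_1(y^\tau)$ to bound $y^\tau_{1,0}$, yielding $0 \leq y^-_0 \leq \nicefrac{(1+T+h)^2}{4} - Th - h$. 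Without some version of this computation your subsequence extraction for $y^{-,k}$ is unjustified, and the limit object need not exist.

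A secondary remark: your closing paragraph worries about the Archimedean property of $\Omega_\pm$, which is a non-issue (they sit inside $[h,T]\times[0,T]$ and the needed ball constraint is inherited from A9 and the compact time intervals), while the genuine obstacle is the unconstrained mass of $\mu_-$ described above. Also, when you pass to the limit in the Liouville constraint you invoke A4--A5 and Stone--Weierstra\ss{}, but the relevant hypothesis for extending from monomial test functions to all of $\cs$ paired with $\hat{\Lie}$ is A6 (the countable set $\{v_k\}$ whose span is bounded-pointwise dense in the graph of $\Lie$); density of polynomials in sup norm alone does not control $\hat{\Lie}v$.
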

\begin{proof}
See Appendix \ref{app:convergence_mean}.
\end{proof}

\subsection{Computational Complexity}

\old{
The per-iteration complexity of solving an \ac{SDP} derived from the degree-$k$ Moment-\ac{SOS} hierarchy in $n$ variables is $O(n^{6k})$ and $O(k^{4n})$ \cite{lasserre2009moments}. 
This scaling is due to the $\binom{n+k}{k}$ size of \ac{PSD} localizing matrix constraints. Table \ref{tab:complexity_mean} reports the size of the maximal-size  \ac{PSD} matrix constraints (for each pseudo-moment sequence) from Problem \ref{prob:risk_mean_mom}.}
Table \ref{tab:complexity_cvar} reports the additional maximal \ac{PSD} matrix sizes needed to represent moment matrices of $y^\nu$ and $y^{\hat{\nu}}$ in the $\ac{ES}$ case.
\old{The scaling of solving Problem \ref{prob:risk_mean_mom} or \ref{prob:risk_cvar_mom} by interior-point methods will therefore grow in a jointly polynomial manner as  $(n+2)^{6 \tilde{k}}$ or $\tilde{k}^{4(n+2)}$.

\begin{table}[h]
    \centering
    \caption{\label{tab:complexity_mean}Size of \ac{PSD} matrices needed to represent formulation \eqref{eq:risk_mean_mom} of Problem \ref{prob:risk_mean_mom} at degree $k$}
    \begin{tabular}{r|c c c c}

        Measure & $\mu_0$ & $\mu_\tau$ & $\mu_+$ & $\mu_-$\\
        Constraint & $M_k(y^0)$ & $M_k(y^\tau)$ & $M_{\tilde{k}}(y^+)$ & $M_{\tilde{k}}(y^-)$\\
         \ac{PSD}  Size & $\binom{n+1+k}{k}$ & $\binom{n+1+k}{k}$ & $\binom{n+2+\tilde{k}}{\tilde{k}}$& $\binom{n+2+\tilde{k}}{\tilde{k}}$
    \end{tabular}    

\end{table}
}

 \begin{table}[h]\caption{\label{tab:complexity_cvar}Size of additional \ac{PSD} matrices needed to represent formulation \eqref{eq:risk_cvar_mom} of Problem \ref{prob:risk_cvar_mom} at degree $k$}
 \centering
    \begin{tabular}{r|c c}
        Measure & $\nu$ & $\hat{\nu}$\\
        Constraint & $M_k(y^{\nu})$ & $M_k(y^{\hat{\nu}})$ \\
         \ac{PSD}  Size & $1+\delta$ & $1+\delta$
    \end{tabular}    

\end{table}

\begin{rmk}
    Consider a discrete-time system with $T/\Delta T$ steps and time window $h$. The time-windowed risk analysis problem in discrete-time can also be implemented as $(T-h)/\Delta$ separate stage-cost maximization problems over a rolling time window. Each of these stage-cost problems will have maximal-size \ac{PSD} matrix $M_{\tilde{k}}(y^+) = \binom{n+1+\tilde{k}}{\tilde{k}}$, as compared to the size $M_{\tilde{k}}(y^+) = \binom{n+2+\tilde{k}}{\tilde{k}}$ in Table \ref{tab:complexity_mean}. While such a multiple-instance solution approach could be a desirable alternative in the discrete-time case, unfortunately no such recourse (finding lower-size \ac{PSD} matrices) exists in the proposed continuous-time formulation.
\end{rmk}

\section{Numerical Examples}

\label{sec:examples}
\old{
MATLAB (2023b) code to generate all examples is publicly available\footnote{\url{https://doi.org/10.3929/ethz-b-000662948}}.
Dependencies include Gloptipoly \cite{henrion2003gloptipoly},  YALMIP \cite{lofberg2004yalmip}, and Mosek \cite{mosek92}.

\subsection{Three-Dimensional Twist System}

The Twist system from \cite{miller2023distance} is
\begin{align}
    \label{eq:twist_dynamics}
    \dot{x}_i(t) &= \textstyle \sum_j A_{ij} x_j - B_{ij}(4x_j^3 - 3x_j)/2,
    \end{align}
    \begin{align}
    \label{eq:twist_parameters}
    A &= \begin{bmatrix}-1 & 1 & 1\\ -1 &0 &-1\\ 0 & 1 &-2\end{bmatrix} &  B &=  \begin{bmatrix}-1 & 0 & -1\\ 0 &1 &1\\ 1 & 1 &0\end{bmatrix}.
       \end{align}

This example finding bounding the time-averaged value of $p(x) = x_3$ for parameters of $X = [-1, 1]^3, \ h = 1, \ T=5.$ Trajectories start in the initial set $X_0 = \{x \in \R^3 \mid (x_1 +0.75)^2 + (x_2- 0.4)^2 + (x_3+0.1)^2 \leq 0.2^2\}.$ Figure \ref{fig:twist_state} plots trajectories of \eqref{eq:twist_dynamics} in cyan using the same coloring convention as in Figure \ref{fig:twist_state}. Solving Problem \ref{prob:risk_mean_mom} at degrees $k\in1..4$ results in the bounds $p^*_{1:4} = [0.4687, 0.3497, 0.3475, 0.3470]$. At the degree $k=4$ relaxation, the approximately-optimal parameters of $x_0^* \approx [ -0.8705, 0.5586,-0.1027]$ (blue circle), $x_p^* \approx [    0.4593, 0.2197, 0.3043]$ (blue star), and $t_p^* \approx [     1.8315]$  are recovered. 

\begin{figure}
    \centering
    \includegraphics[width=0.9\exfiglength]{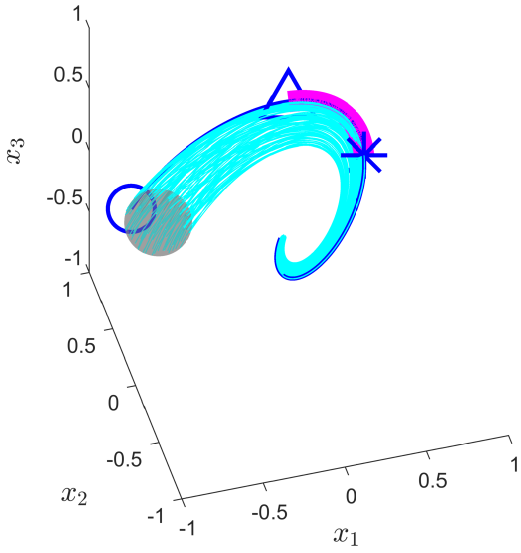}
    \caption{Trajectories of Twist system \eqref{eq:twist_dynamics} starting from the gray-sphere initial set $X_0$.}
    \label{fig:twist_state}
\end{figure}

Figure \ref{fig:twist_cost} plots the time-windowed values of $p$ along trajectories, and highlights the near-optimal trajectory.

\begin{figure}
    \centering
    \includegraphics[width=\exfiglength]{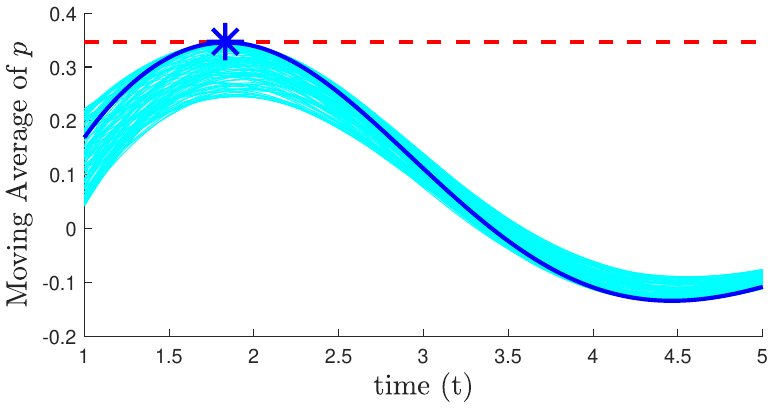}
    \caption{Evolution of time-windowed mean of $p(x)=x_3, h=1$ along trajectories in Figure \ref{fig:twist_state}. }
    \label{fig:twist_cost}
\end{figure}

\subsection{Two-Dimensional Decaying Oscillator}

This example involves a nonlinear decaying oscillator system. It will begin with deterministic \ac{ODE} dynamics, and will advance to \ac{SDE} dynamics.

\subsubsection{Deterinistic Decaying Oscillator}
\label{sec:decay_ode}
\begin{align}
    \dot{x} = \begin{bmatrix}
        -0.5x_1 + 3x_2 (x_1-0.1)^2 \\ -3 x_1 -0.5 x_2 + (x_2 - 0.1)^2 + 1.5
    \end{bmatrix}. \label{eq:osc}
\end{align}

The parameters of this problem are $X = [-0.5, 2.5] \times [-2, 1.5],$ $T = 5, $ $h=1.5, $ and $X_0 = \{x \in \R^2 \mid x_1^2 +(x_2-0.7)^2 \leq 0.1^2\}$. Upper-bounds on the  maximal time-averaged value of $p(x) = x_2$ as found through solving Problem \ref{prob:risk_mean_mom} are $p^*_{1:5} = \{1.5, 0.6376, 0.3852, 0.2741, 0.2433\}.$ Figures \ref{fig:osc_mean_state} and \ref{fig:osc_mean_cost} plot sampled trajectories and evaluations of the time-windowed cost $\int_{t'=t-1.5}^{t} p(x(t' \mid x_0)) dt'$, respectively. Even though near-optimal values of $(x_0^*, t_p^*, x_p^*)$ are not recovered from the numerical solution of Problem \ref{prob:risk_mean_mom} (because the solved moment matrices $M_5(1 y^0), \ M_5(1 y^\tau)$ have a second-largest eigenvalue above the $10^{-3}$ threshold when compared to a top eigenvalue of 1.999), the $k=5$ upper-bound as plotted in Figure \ref{fig:osc_mean_cost} remains an upper-bound for the time-windowed mean objective of sampled trajectories. Note how the instantaneous peak of $\approx  1.01$ is much greater than the $h=1.5$ time-windowed upper bound of 0.2433. In this system, the time-windowed maximum occurs on the second pass about the oscillator (with a crest around time $t=3.2$ in Figure \ref{fig:osc_mean_cost}).

\begin{figure}[t]
    \centering
    \includegraphics[width=\exfiglength]{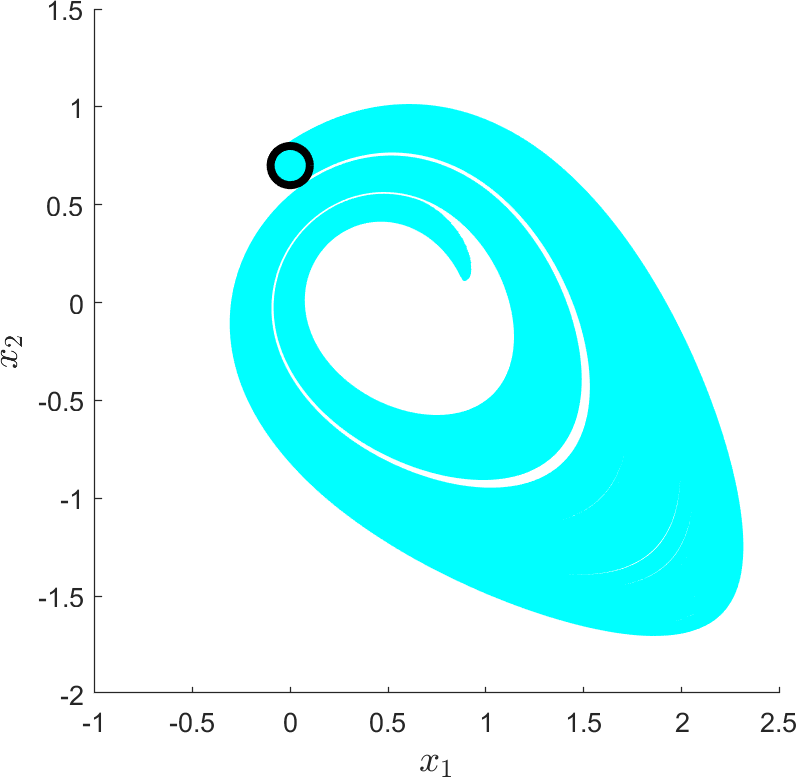}
    \caption{Trajectories of the oscillator \eqref{eq:osc} starting from the black-circle initial set $X_0$.}
    \label{fig:osc_mean_state}
\end{figure}

{\color{black}We now consider the \ac{ES}-type time-windowed risk analysis problem with parameter $\epsilon = 0.15$. The  moment-\ac{SOS} truncation program \ref{prob:risk_cvar_mom} for \ac{ES}-type risk analysis at degrees $k=1:5$  yields the bounds $p^*{ES, 1:5} = [1.5, 1.5, 1.1949, 1.0446, 1.0073]$. The maximal $0.15$-level \ac{ES} as empirically found through sampling 1000 trajectories is 0.9921, and the maximal pointwise value of $p$ under this sampling is 1.0109.
Figure \ref{fig:osc_cvar_cost} plots the time-windowed \ac{ES} of sampled trajectories (cyan) as well as the $p^*_{ES, 5}$ upper bound (red).}

\begin{figure}[h]
    \centering
    \includegraphics[width=\exfiglength]{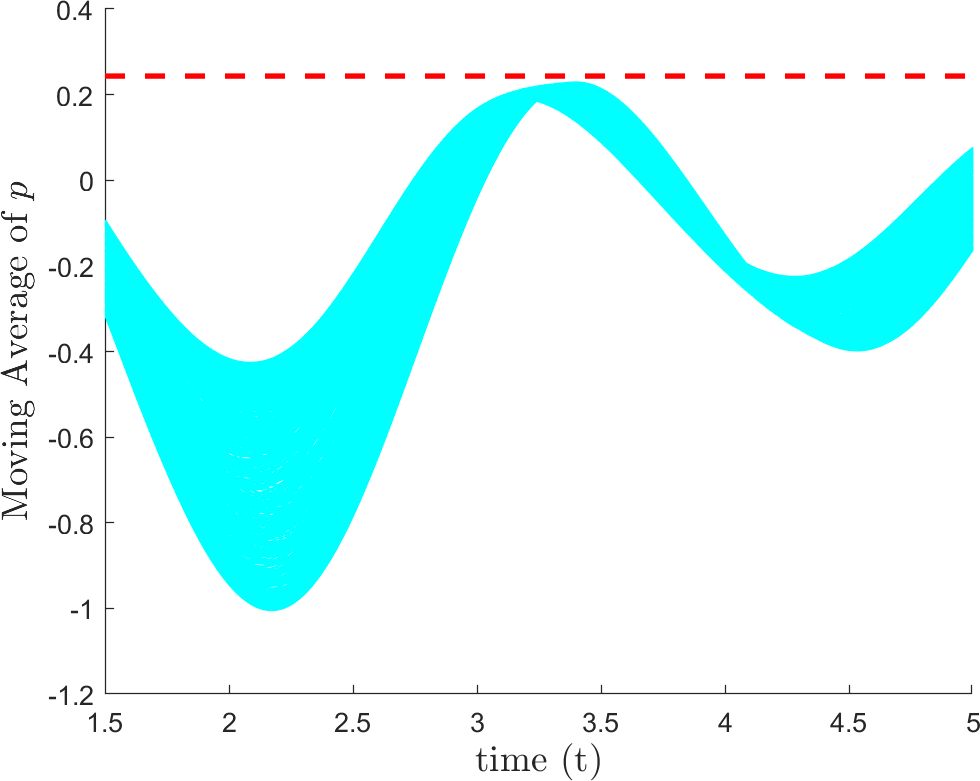}
    \caption{Time-windowed mean of $p(x) = x_2, \ h = 1.5$ along trajectories in Figure \ref{fig:osc_mean_state}, with the $k=5$ bound plotted as the red dotted line. }
    \label{fig:osc_mean_cost}
\end{figure}
}

\begin{figure}[h]
    \centering
    \includegraphics[width=\exfiglength]{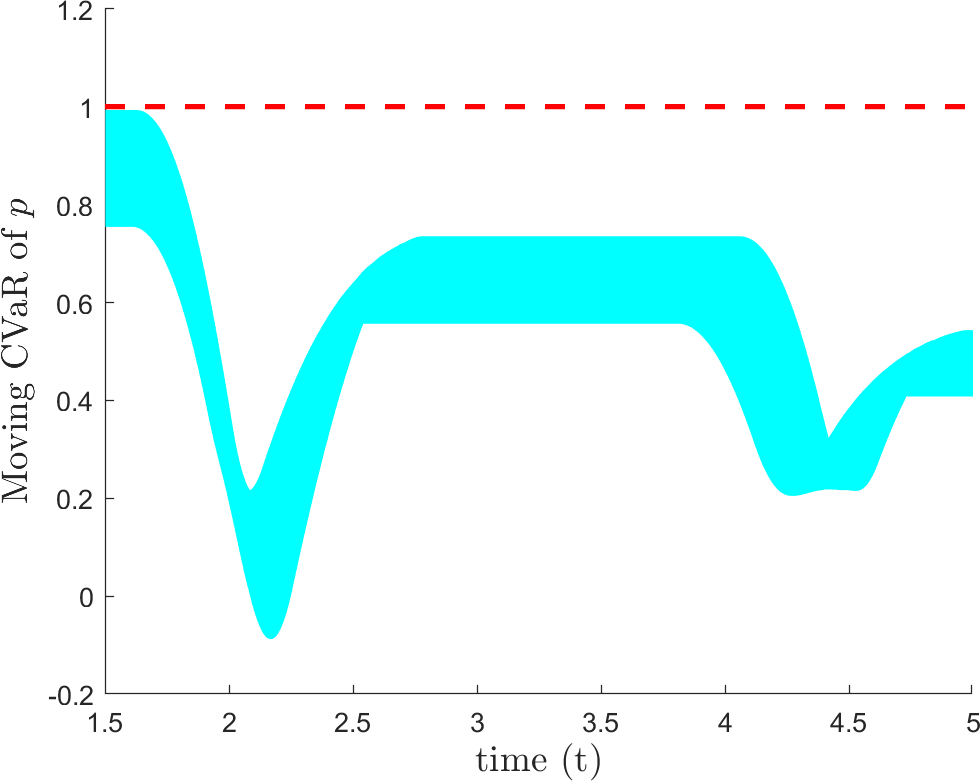}
    \caption{Time-windowed \ac{ES} of $p(x) = x_2, \ h = 1.5, \ \epsilon = 0.15$ along trajectories in Figure \ref{fig:osc_mean_state}, with the $k=5$ bound plotted as the red dotted line. }
    \label{fig:osc_cvar_cost}
\end{figure}

\subsubsection{Stochastic Decaying Oscillator}

We modify the deterministic \ac{ODE} dynamics in \eqref{eq:osc} to become the following \ac{SDE}:
\begin{align}
    dx = \begin{bmatrix}
        -0.5x_1 + 3x_2 (x_1-0.1)^2 \\ -3 x_1 -0.5 x_2 + (x_2 - 0.1)^2 + 1.5
    \end{bmatrix} dt + \begin{bmatrix}
        0.1 \\ 0
    \end{bmatrix}dW. \label{eq:osc_stoch}
\end{align}

Risk analysis of \eqref{eq:osc_stoch} will proceed with the same parameters $(X, T, h, X_0, p)$ as used in the prior section \ref{sec:decay_ode}.
Trajectories of \eqref{eq:osc_stoch} are plotted in Figure \ref{fig:osc_stoch}. Each of these 100 trajectories had their initial point uniformly sampled from $X_0$, and the integration proceeded by Euler-Maruyama sampling with time increment $5 \times 10^{-3}$. Trajectory execution stopped upon contact with the boundary $\partial X$.
\begin{figure}[h]
    \centering
    \includegraphics[width=\exfiglength]{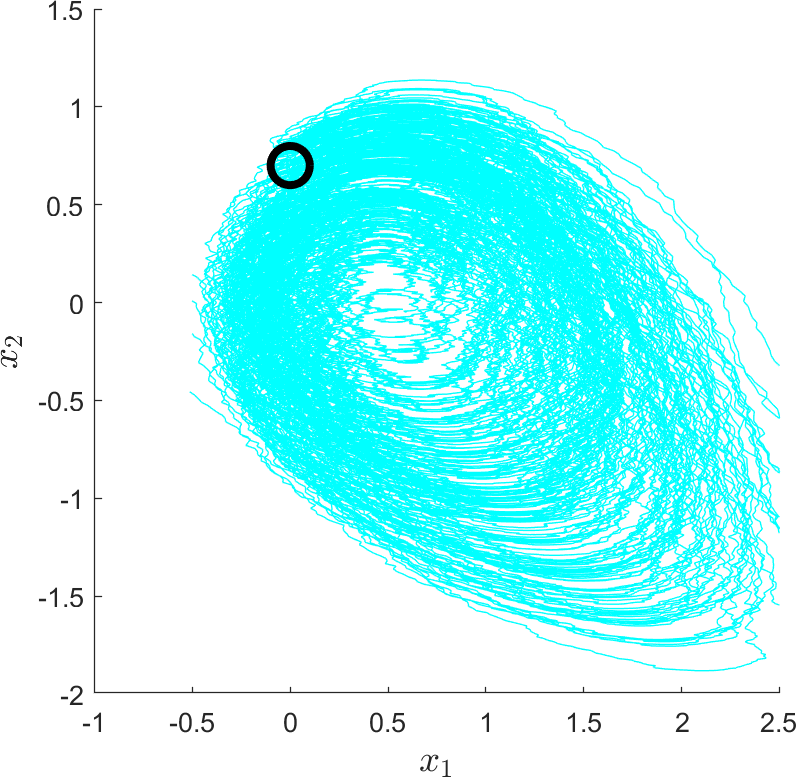}
    \caption{100 sampled trajectories of the stochastic decaying oscillator with dynamics in \eqref{eq:osc_stoch} starting from the initial set $X_0$ (black circle).}
    \label{fig:osc_stoch}
\end{figure}
 Solving the mean-type Problem \ref{prob:risk_mean_mom} at degrees $k=1..5$ produces the bounds $p^*_{1:5} = [1.5, 0.6489, 0.4113, 0.2850, 0.2488].$ 
 
 Figure \ref{fig:osc_stoch_mean} compares the mean-bound $p^*_5$ with an empirically-generated time-windowed trace of time-windowed mean values.
 \begin{figure}[h]
    \centering
    \includegraphics[width=\exfiglength]{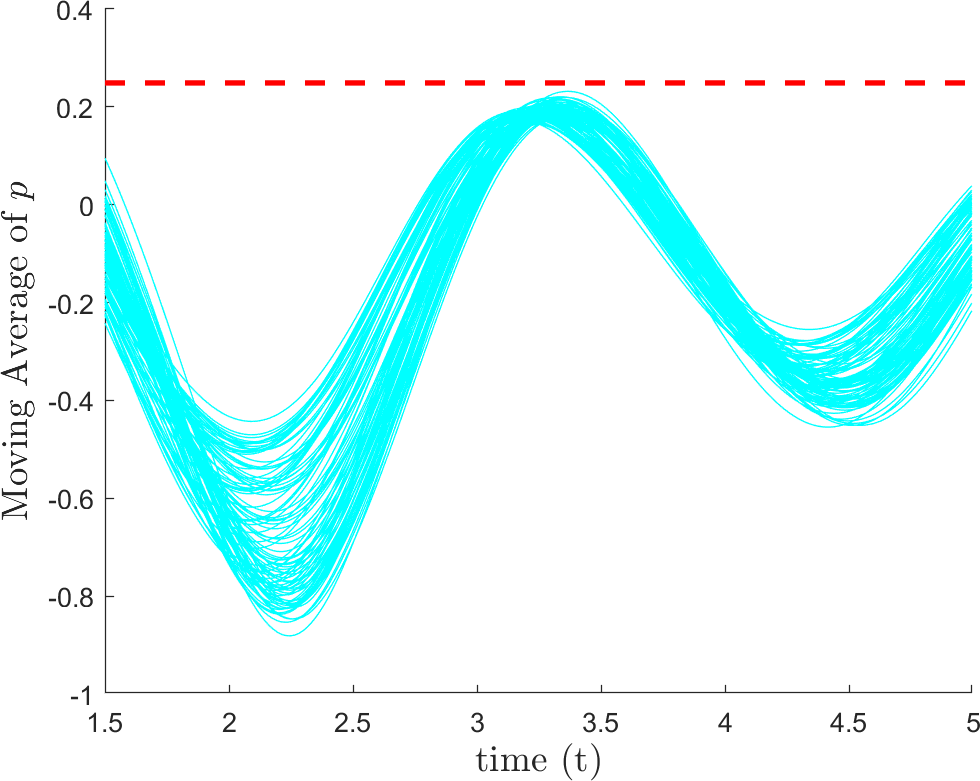}
    \caption{Time-windowed mean bound (red) of $p^*_5$ vs. empirical time-windowed mean (cyan) for system \eqref{eq:osc_stoch}.}
    \label{fig:osc_stoch_mean}
\end{figure}
Solving the \ac{ES}-type Problem \ref{prob:risk_cvar_mom} with $\epsilon=0.15$ at degrees $k=1..4$ yields the bounds $p^*_{ES, 1:4} = [1.5, 1.5, 1.2344, 1.0604]$. The $k=5$ truncation of Problem \ref{prob:risk_cvar_mom} produces an invalid bound of $0.9556$ due to numerical conditioning and error. Figure \ref{fig:osc_stoch_cvar} compares the bound $p^*_{ES, 5}$ with the time-windowed empirical \ac{ES} (under the same 10,000 samples as used in Figure \ref{fig:osc_stoch_mean}).
 \begin{figure}[h]
    \centering
    \includegraphics[width=\exfiglength]{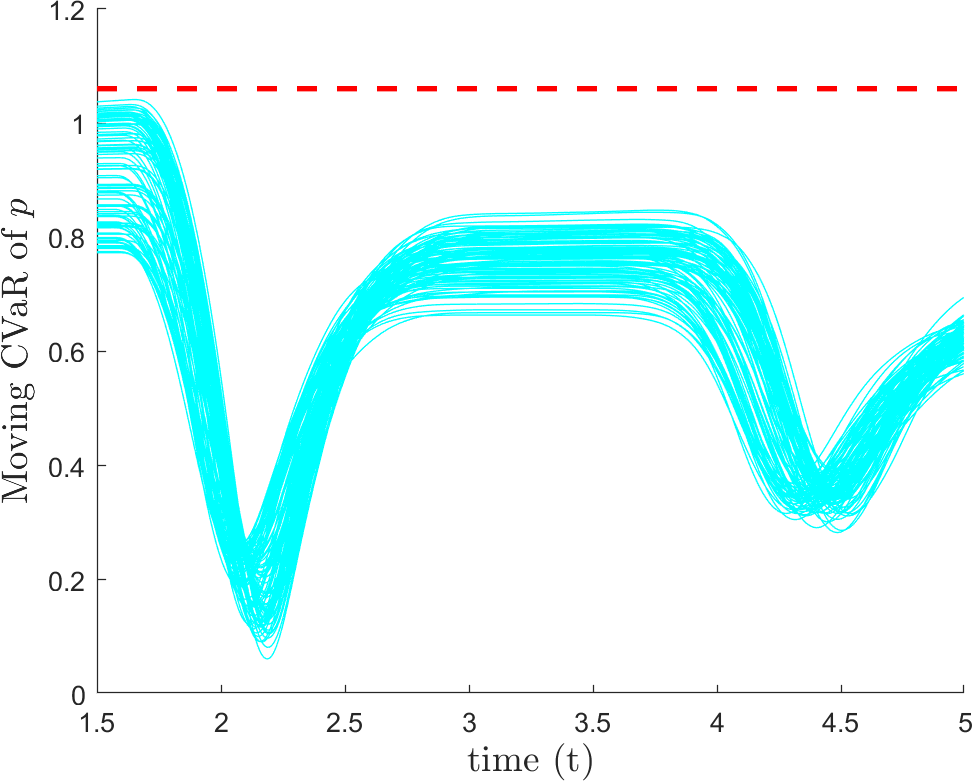}
    \caption{Time-windowed \ac{ES} bound of $p^*_{ES, 4}$ (red) at $\epsilon=0.15$ vs. empirical time-windowed \ac{ES} (cyan) for system \eqref{eq:osc_stoch}.}
    \label{fig:osc_stoch_cvar}
\end{figure}

\section{Conclusion}

\label{sec:conclusion}


Time-windowed risks of stochastic processes offer a different perspective on trajectory safety as compared to instantaneous risks. This paper upper-bounded the time-windowed mean and \ac{ES} risk of a stochastic process by relaxing Problem \ref{prob:risk_window} into  infinite-dimensional \acp{LP} in occupation measures (Problems \ref{prob:risk_mean_meas}, \ref{prob:risk_mean_cont}, \ref{prob:risk_cvar_meas}, and  \ref{prob:risk_cvar_cont}). Contiguity of the time window was enforced by adding a new stopping-time coordinate $s$ and ensuring that $\mu_\pm$  have temporal supports in the partitioned sets $\Omega_\pm$. The resultant \ac{LP} were truncated through the moment-\ac{SOS} hierarchy of \acp{SDP} to obtain tractable finite-dimensional programs (Problems \ref{prob:risk_mean_mom} and \ref{prob:risk_cvar_mom}). This sequence of \acp{SDP} was proven to converge to the optimal value $P^*$ from \eqref{eq:risk_window} as the relaxation degree tends towards infinity when assumptions A1-A9 hold (Theorem \ref{thm:convergence_mean}). 

Several extensions to this method are possible. The presented scheme is amenable to wider classes of dynamics than the considered Markovian stochastic processes in continuous-time or discrete-time. This broader class of dynamics includes 
non-Markovian stochastic processes \cite{miller2023delay} and hybrid stochastic processes involving continuous-time and discrete-time transitions \cite{miller2023hybrid} (such as Piecewise Markov Decision Processes). The main focus of future work on this topic is solving the time-windowed-risk-minimizing control problem for continuous-time processes by establishment of no-relaxation-gap conditions and efficient numerical computation. The effectiveness of the presented methodology for risk estimation will increase as finite truncations of infinite \acp{LP} become faster and possess greater numerical accuracy. The development of improved truncation methods will therefore increase the feasibility of large-scale time-windowed risk estimation.

\section{Acknowledgements}

The authors would like to thank Riccardo Bonalli, Beno\^{i}t Bonnet-Weill,  Adrian Hauswirth, and Nicolas Lanzetti for discussions about stochastic processes and power systems.

\bibliographystyle{IEEEtran}
\bibliography{references.bib}

\appendix
\section{Proof of Mean Strong Duality in Theorem \ref{thm:duality_mean}}
\label{app:duality_mean}


This proof will follow the conventions and methods from Theorem 2.6 of  \cite{tacchi2021thesis} to prove strong duality of \eqref{eq:risk_mean_meas} and \eqref{eq:risk_mean_cont}. The \ac{LP} \eqref{eq:risk_mean_meas} and \eqref{eq:risk_mean_cont} will be cast into the form of standard conic optimization problems.

\subsection{Weak Duality}

The resident state spaces may be defined as
\begin{align}
    \mathcal{X}' &= C([h, T] \times X_0) \times C([h, T] \times X)^2  \nonumber  \\
    & \qquad \times C(\Omega_+  \times X)\times C(\Omega_-  \times X)\label{eq:res_spaces}\\
    \mathcal{X} &= \mathcal{M}([h, T] \times X_0) \times \mathcal{M}([h, T] \times X)^2   \nonumber \\
    & \qquad \times \mathcal{M}(\Omega_+  \times X)\times \mathcal{M}(\Omega_-  \times X). \nonumber
\end{align}

Nonnegative subcones of the spaces in \eqref{eq:res_spaces} are
\begin{align}
    \mathcal{X}'_+ &= C_+([h, T] \times X_0) \times C_+([h, T] \times X)^2 \nonumber  \\
    & \qquad \times C_+(\Omega_+  \times X)\times C_+(\Omega_-  \times X)\label{eq:dual_spaces}\\
    \mathcal{X}_+ &= \mathcal{M}_+([h, T] \times X_0) \times \mathcal{M}_+([h, T] \times X)^2 \nonumber  \\
    & \qquad \times \mathcal{M}_+(\Omega_+  \times X)\times \mathcal{M}_+(\Omega_-  \times X). \nonumber
\end{align}

The spaces $(\mathcal{X}_+', \mathcal{X}_+)$ in \eqref{eq:res_spaces} are topological duals under the compactness assumption in A1. The measure variables $\bbmu = (\mu_0, \mu_\tau, \mu_+, \mu_-)$ of \eqref{eq:risk_meas_def} obey the membership $\bbmu \in \mathcal{X}_+.$

The constraint-sets $(\mathcal{Y}', \mathcal{Y})$ are defined as 
\begin{align}
    \mathcal{Y}' &=  \cs([h, T] \times [0, T] \times X) \times \R \times \R \\
    \mathcal{Y} &= \cs([h, T] \times [0, T] \times X)' \times 0 \times 0. \nonumber 
\end{align}
We subsequently define the symbols $\mathcal{Y}'_+ = \mathcal{Y}'$ and $\mathcal{Y}_+ =  \mathcal{Y}$ in accordance with the convention in \cite{tacchi2021thesis}, given that there are no affine inequality constraints present in \eqref{eq:risk_mean_meas}. The space $\mathcal{Y}$ will have a sup-norm-bounded weak topology, and the space $\mathcal{X}$ possesses a weak-* topology. The variables $\bell =  (v, \gamma, \xi)$ from \eqref{eq:risk_mean_cont} satisfy $\bell \in \mathcal{Y}'.$

An affine operator $\A: \mathcal{X} \rightarrow \mathcal{Y}$ induced from \eqref{eq:risk_mean_meas_flow}-\eqref{eq:risk_mean_meas_prob} with adjoint $\A': \mathcal{Y}' \rightarrow \mathcal{X}'$ operates on variables $(\bbmu, \bell)$ as in:
\begin{align}
    \A(\bbmu) =[&\varphi_\# \mu_\tau - \delta_0 \otimes \mu_0 - \hat{\Lie}^\dagger (\mu_+ + \mu_-), \ \inp{1}{\mu_0}, \ \inp{1}{\mu_+}] \nonumber\\ 
    \A'(\bell) = [&\gamma - v(s,0,x), v(s,s,x), \xi -\hat{\Lie}_f v, -\hat{\Lie}_f v].
\end{align}

The cost and constraint vector for problem \eqref{eq:risk_mean_meas} are
\begin{align}
    {\mathbf{c}} &= [0, 0, p(x)/h, 0], &
    {\mathbf{b}} &= [0, 1, h].
\intertext{These vectors satisfy the pairings of }
\inp{\mathbf{c}}{\bbmu} &= \inp{p}{\mu_+}/h, & \inp{\mathbf{b}}{\bell} &= \gamma + h \xi.
\end{align}

Problem \eqref{eq:risk_mean_meas} can therefore be cast into the standard-form expression
\begin{align}
    p^* =& \sup_{\bbmu \in \mathcal{X}_+} \inp{\mathbf{c}}{\boldsymbol{\mu}} & & \mathbf{b} - \A(\boldsymbol{\mu}) \in \mathcal{Y}_+. \label{eq:risk_mean_meas_std}\\
\intertext{A similar process results in the formulation of \eqref{eq:risk_mean_cont} as}
    d^* = &\inf_{\bell \in \mathcal{Y}_+'} \inp{\boldsymbol{\ell}}{\mathbf{b}}
    & &\A'(\bell) - \mathbf{c} \in \mathcal{X}_+. \label{eq:risk_mean_cont_std}
\end{align}
This transformation ensures weak duality.
\subsection{Strong Duality}
\label{app:strong_duality_mean_sec}

To prove strong duality, we first require the following lemma about boundedness of measure solutions:
\begin{lem}
\label{lem:mean_mass}
   All measures that are feasible for \eqref{eq:risk_mean_meas} will be bounded under A1-A4.
\end{lem}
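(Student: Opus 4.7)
\textbf{Proof proposal for Lemma \ref{lem:mean_mass}.} The plan is to bound the mass of each of the four measures $\mu_0, \mu_+, \mu_\tau, \mu_-$ separately, in that order of difficulty. The first two are immediate from the explicit normalization constraints \eqref{eq:risk_mean_meas_prob} and \eqref{eq:risk_mean_meas_window}: $\inp{1}{\mu_0} = 1$ and $\inp{1}{\mu_+} = h$. For the remaining two measures, the strategy is to test the Liouville-type flow identity \eqref{eq:risk_mean_meas_flow} against carefully chosen functions $v \in \cs$ and then use the compactness of the time interval $[0,T]$ supplied by A1 together with the identities $\hat{\Lie} 1 = 0$ (A4) and $\hat{\Lie} t = 1$ (since $s$ is constant in the augmented dynamics).

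First I would bound $\mu_\tau$ by pairing both sides of \eqref{eq:risk_mean_meas_flow} with the test function $v \equiv 1$. The left-hand side gives $\inp{1}{\varphi_\# \mu_\tau} = \inp{1}{\mu_\tau}$ by the definition of the pushforward. On the right-hand side, $\inp{1}{\delta_0 \otimes \mu_0} = \inp{1}{\mu_0} = 1$ by \eqref{eq:risk_mean_meas_prob}, while $\inp{1}{\hat{\Lie}^\dagger(\mu_-+\mu_+)} = \inp{\hat{\Lie} 1}{\mu_-+\mu_+} = 0$ by A4. Hence $\inp{1}{\mu_\tau} = 1$.

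Second, to bound $\mu_-$, I would pair the Liouville identity with $v(s,t,x) = t$. By A4, $\cs$ contains polynomials in $t$ for the generators of interest; for the \ac{SDE} \eqref{eq:lie} and discrete-time \eqref{eq:generator_discrete} cases this membership is straightforward. The LHS becomes $\inp{t}{\varphi_\# \mu_\tau} = \inp{s}{\mu_\tau}$, since $\varphi$ collapses $t = s$. For the RHS, $\inp{t}{\delta_0 \otimes \mu_0} = 0$, and $\inp{t}{\hat{\Lie}^\dagger(\mu_-+\mu_+)} = \inp{\hat{\Lie} t}{\mu_-+\mu_+} = \inp{1}{\mu_-+\mu_+}$ since $\hat{\Lie}t = 1$. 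Combining the previously obtained bounds yields
\begin{equation*}
    \inp{1}{\mu_-} \;=\; \inp{s}{\mu_\tau} - h \;\leq\; T \cdot \inp{1}{\mu_\tau} - h \;=\; T - h,
\end{equation*}
where the inequality uses $s \in [h,T]$ and $\inp{1}{\mu_\tau} = 1$. Collecting the four bounds proves the lemma.

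The main obstacle I anticipate is justifying that the polynomial test function $t$ lies in $\cs = \textrm{dom}(\hat\Lie)$ in full generality; this must be invoked or verified directly for whichever concrete generator one has in mind. For the \ac{SDE} case the domain is $C^{1,2}$ and polynomials are trivially included, and for the discrete-time case the domain is all of $C([0,T]\times X)$. Beyond that, the argument is a routine application of the adjoint identity. This boundedness lemma is the key ingredient needed for the standard weak-$\!*$ compactness argument in the duality proof of Theorem \ref{thm:duality_mean}, since once feasible measures are bounded in total variation, the full feasible set is weak-$\!*$ compact by Banach–Alaoglu and the strong-duality conclusion (and attainment) follows from e.g. the conic duality framework of \cite{tacchi2021thesis}.
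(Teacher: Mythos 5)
Your proposal is correct and follows essentially the same route as the paper's proof: both establish the masses of $\mu_0$ and $\mu_+$ from the explicit constraints, then test the Liouville identity \eqref{eq:risk_mean_meas_flow} against $v=1$ to get $\inp{1}{\mu_\tau}=1$ and against $v=t$ to bound $\inp{1}{\mu_-}$ by $T-h$ using $s\in[h,T]$. Your added care about $\varphi_\#$ collapsing $t=s$ and about whether $t\in\cs$ is a welcome refinement but does not change the argument.
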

\begin{proof}
   Boundedness will be proven by the sufficient conditions of compact support and finite mass. Assumption A1 enforces compact support of all measures in \eqref{eq:risk_meas_def}. The initial measure $\mu_0$ has mass 1 by constraint \eqref{eq:risk_mean_meas_prob}, and the forward measure $\mu_+$ has mass $h$ by constraint \eqref{eq:risk_mean_meas_window}. Passing a test function $v(s, t, x) = 1$ through \eqref{eq:risk_mean_meas_flow} results in $\inp{1}{\mu_\tau} = \inp{1}{\mu_0} = 1$. Sending $v(s, t, x) = t$ through \eqref{eq:risk_mean_meas_flow} yields $\inp{t}{\mu_\tau} = \inp{1}{\mu_+ + \mu_-}$. Given that $\inp{t}{\mu_\tau} \in [h, T]$ and that $\mu_-$ is a nonnegative measures with a nonnegative mass, it holds that $\inp{1}{\mu_-} \in [0, \max(0, \inp{t}{\mu_\tau} - \inp{1}{\mu_+})]$. Boundedness of measures is therefore confirmed.
\end{proof}

Strong duality will be proven by fulfillment of the following sufficient conditions \cite[Theorem 2.6]{tacchi2021thesis}:
\begin{enumerate}
    \item[R1] All support sets are compact.
    \item[R2] All feasible $\bbmu$ have finite mass.
    \item[R3] The problem data $(\A, \mathbf{b}, \mathbf{c})$ are defined  by continuous functions in $(s, t, x)$.
    \item[R4] A feasible $\boldsymbol{\mu}_{\textrm{feas}} \in \mathcal{X}_+$ exists with  $\mathbf{b} - \A(\boldsymbol{\mu}_\textrm{feas}) \in \mathcal{Y}_+$.
\end{enumerate}

Requirement R1 is ensured by assumption A1. Requirement R2 is proven by Lemma \ref{lem:mean_mass}. For requirement R3, the vector $\mathbf{b}$ is constant in $(s, t, x)$, and the function $p(x)$ in $\mathbf{c}$ is continuous by A2. Given that the image of $\hat{\Lie}$ is contained in $C([h, T] \times [0, T] \times X)$ (A4-A6), R3 is fulfilled. A feasible $\bbmu$ may be created through the construction procedure from the proof of Theorem \ref{thm:risk_mean_upper}, satisfying R4. As all conditions are met, strong duality is proven.

\section{Proof of ES Strong Duality in Theorem \ref{thm:duality_cvar}}
\label{app:duality_cvar}

The proof of strong duality for Theorem \ref{thm:duality_cvar} echoes the proof of Theorem \ref{thm:duality_mean} outlined above in Appendix \ref{app:duality_mean}. We therefore sketch the highlights and modifications done in the \ac{ES} case.

\subsection{Weak Duality}

The set $\mathcal{X}'$ from \eqref{eq:res_spaces} in the \ac{ES} case has the expression of  
\begin{align}
    \mathcal{X}' &= C([h, T] \times X_0) \times C([h, T] \times X)^2 \\
     & \qquad \times C(\Omega_+  \times X)\times C(\Omega_-  \times X) \nonumber \\
     & \qquad \times C([p_{\min}, p_{\max}])^2. \nonumber 
\end{align}

The spaces $\mathcal{X}, \mathcal{X}'_+, \mathcal{X}_+$ are similarly enriched by continuous functions or measures supported over the set $[p_{\min}, p_{\max}]$. In the \ac{ES} context, the measure variable $\bbmu$ will refer to $\bbmu = (\mu_0, \mu_\tau, \mu_+, \mu_-, \nu, \hat{\nu}).$

The constraint sets in the \ac{ES} case are 
\begin{align}
    \mathcal{Y}'_+ &=  \cs([h, T] \times [0, T] \times X) \times \R^3 \times C([p_{\min}, p_{\max}]) \\
    \mathcal{Y}_+ &= \cs([h, T] \times [0, T] \times X)' \times 0^3 \times \mathcal{M}([p_{\min}, p_{\max}]), \nonumber
\end{align}
with variables $\bell = (v, \gamma, \xi, \beta, w)$ satisfying $\bell \in \mathcal{Y}_+.$

The affine operator $\mathcal{A}: X \rightarrow \mathcal{Y}$ in the \ac{ES} case is
\begin{align}
    \A(\bbmu) =[&\varphi_\# \mu_\tau - \delta_0 \otimes \mu_0 - \hat{\Lie}^\dagger (\mu_+ + \mu_-), \ \inp{1}{\mu_0}, \ \inp{1}{\mu_+} \nonumber\\
    & \inp{1}{\nu},   \epsilon\nu + \hat{\nu} -p_\# \pi_\#^x \mu_\tau/h] \nonumber\\
    \A'(\bell) = [&\gamma - v(s,0,x), v(s,s,x), \xi - w(p(x))/h-\hat{\Lie}_f v,  \nonumber\\
    & -\hat{\Lie}_f v, \ \beta + \epsilon w(q), \ w(q)].
\end{align}

The cost and constraint for the \ac{ES} Problem \ref{prob:risk_cvar_meas} vectors are 
\begin{align}
        {\mathbf{c}} &= [0, 0, 0, 0, \textrm{id}_\R, 0], &
    {\mathbf{b}} &= [0, 1, h, 1, 0],
\end{align}

forming the variable pairings
\begin{align}
    \inp{\mathbf{c}}{\bbmu} &= \inp{\textrm{id}_\R}{\nu}, & \inp{\mathbf{b}}{\bell} &= \gamma + h \xi + \beta.
\end{align}

Under these definitions for $(\mathcal{A}, \mathbf{b}, \mathbf{c}, \mathcal{X}_+, \mathcal{Y}_+') $, Problems \ref{prob:risk_cvar_meas} and \ref{prob:risk_cvar_cont} can be placed in to standard forms \eqref{eq:risk_mean_cont_std} and \eqref{eq:risk_mean_cont_std} respectively (with objectives $p^*_{ES}$ and $d^*_{ES}$). This proves weak duality.

\subsection{Strong Duality}
Strong duality holds by extension of the arguments made in Section \ref{app:strong_duality_mean_sec}. Boundedness of measures for the \ac{ES} case is proven in the following lemma:
\begin{lem}
    All measure solutions to \eqref{eq:risk_cvar_meas} from Problem \ref{prob:risk_cvar_meas} are bounded under assumptions A1-A4. \label{lem:cvar_bounded}
\end{lem}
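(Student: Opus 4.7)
The plan is to extend Lemma \ref{lem:mean_mass} from the mean-type setting to the \ac{ES}-type setting. Since the \ac{ES}-type problem \eqref{eq:risk_cvar_meas} shares the constraints \eqref{eq:risk_cvar_meas_flow}--\eqref{eq:risk_cvar_meas_window} with the mean-type problem \eqref{eq:risk_mean_meas}, and the measures $(\mu_0, \mu_\tau, \mu_+, \mu_-)$ are subject to identical support restrictions from \eqref{eq:risk_meas_def}, Lemma \ref{lem:mean_mass} applies verbatim to establish that these four measures have compact support (from A1) and finite mass. Specifically, $\inp{1}{\mu_0}=1$, $\inp{1}{\mu_+}=h$, $\inp{1}{\mu_\tau}=1$ (by testing \eqref{eq:risk_cvar_meas_flow} against $v \equiv 1$, using $\hat{\Lie}1 = 0$ from A4), and $\inp{1}{\mu_-}$ is bounded by $T - h$ (by testing against $v(s,t,x)=t$).

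The new step is to bound the mass of the two additional measures $\nu, \hat{\nu}$ introduced in \eqref{eq:risk_meas_def_cvar}. Both are supported on the compact interval $[p_{\min}, p_{\max}]$, which is well-defined and bounded by A1 and A2 (continuity of $p$ on the compact state space $X$). The mass of $\nu$ is fixed at $1$ by \eqref{eq:risk_cvar_meas_cvarmass}. To bound the mass of $\hat{\nu}$, I would pair the equality constraint \eqref{eq:risk_cvar_meas_cvarcon} against the constant function $1 \in C([p_{\min}, p_{\max}])$, yielding
\[
\epsilon \inp{1}{\nu} + \inp{1}{\hat{\nu}} = \inp{1}{p_\# \pi^x_\#(\mu_+/h)} = \inp{1}{\mu_+}/h.
\]
Combining with $\inp{1}{\nu} = 1$ from \eqref{eq:risk_cvar_meas_cvarmass} and $\inp{1}{\mu_+}=h$ from \eqref{eq:risk_cvar_meas_window} gives $\inp{1}{\hat{\nu}} = 1 - \epsilon$, which is finite (and nonnegative since $\epsilon \in [0,1)$).

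There is essentially no obstacle here: every measure has compact support by A1, A2, and the specification in \eqref{eq:risk_meas_def}, \eqref{eq:risk_meas_def_cvar}, and every mass is either directly prescribed by an equality constraint or pinned down by testing one of the affine equality constraints against the constant function $1$. The main stylistic choice is simply to cite Lemma \ref{lem:mean_mass} for the four shared measures rather than rederive the bounds, and present only the short pairing argument for $\nu$ and $\hat{\nu}$.
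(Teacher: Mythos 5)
Your proposal is correct and follows essentially the same route as the paper: invoke Lemma \ref{lem:mean_mass} for the four shared measures, note that $\nu,\hat{\nu}$ are compactly supported on $[p_{\min},p_{\max}]$ with $\inp{1}{\nu}=1$, and pair \eqref{eq:risk_cvar_meas_cvarcon} with the constant function to conclude $\inp{1}{\hat{\nu}}=1-\epsilon$. No gaps.
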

\begin{proof}
    Boundedness of $(\mu_0, \mu_\tau, \mu_+, \mu_-)$ was proven by Lemma \ref{lem:mean_mass}. The supports of $(\nu, \hat{\nu})$ are constrained to be compact in the set $[p_{\min}, p_{\max}]$ by definition in \eqref{eq:risk_meas_def_cvar}. The mass of $\nu$ is constrained to 1 by \eqref{eq:risk_cvar_meas_cvarmass}. Given that $\inp{1}{\mu_+} = \inp{1}{p_\# \pi_\#^x \mu_+} = h$, it holds by  \eqref{eq:risk_cvar_meas_cvarcon} that $\inp{1}{\hat{\nu}} = 1-\epsilon$. Both of $(\nu, \hat{\nu})$ are nonnegative measures with a finite support and a nonnegative mass, which proves that each of them are bounded. The lemma is therefore proven.
\end{proof}

Imposition of assumption A1 and satisfaction of Lemma \ref{lem:cvar_bounded} ensures that requirements R1 and R2 for strong duality are met. 
Continuity in R3 is maintained from the mean case, given that the identity map $\textrm{id}_\R$ is continuous.
The construction procedure used in the proof of Theorem \ref{thm:no_relaxation_cvar} satisfies R4. Because all requirements are met, strong duality in the \ac{ES} case between Problems \ref{prob:risk_cvar_meas} and \ref{prob:risk_cvar_cont} is proven.
\section{Proof of Convergence from Theorem \ref{thm:convergence_mean}}
\label{app:convergence_mean}
In order to prove convergence of $p^*_k \rightarrow p^*$  and $p^*_{ES, k} \rightarrow p^*_{ES}$ as $k \rightarrow \infty$, the following lemma establishing boundedness of solutions for Problem 
\ref{prob:risk_mean_mom} is required:

\begin{lem}
\label{lem:mean_mass_mom}
    At any level $k\geq 1$ of the hierarchy, all vectors $y^\bullet$ that are feasible for \eqref{eq:risk_mean_mom} or \eqref{eq:risk_cvar_mom} will have uniformly bounded first element $y^\bullet_0$ under A1-A4.
\end{lem}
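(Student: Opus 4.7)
The plan is to adapt the measure-space mass bound of Lemma~\ref{lem:mean_mass} to the truncated moment setting by testing the Liouville relation \eqref{eq:liou_mom} with low-degree monomials and exploiting the degree-1 localizing matrices. Since the bound is only claimed for the zeroth moment, the argument will in fact use only $M_1$-level constraints, so it will apply uniformly for every $k \geq 1$.

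First, I would read off the easy bounds directly from the finite-dimensional constraints: \eqref{eq:risk_mean_mom_prob} gives $y^0_0 = 1$ and \eqref{eq:risk_mean_mom_window} gives $y^+_0 = h$. Then I would instantiate \eqref{eq:liou_mom} at $\alpha=\beta=|\gamma|=0$. By assumption A4, $\hat{\Lie} 1 = 0$, so the third term vanishes and the relation collapses to $L_{y^\tau}(1) = L_{y^0}(1)$, yielding $y^\tau_0 = 1$.

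The only nontrivial step is bounding $y^-_0$. For this I would instantiate \eqref{eq:liou_mom} at $\alpha=0$, $\beta=1$, $\gamma=0$; since $\hat{\Lie} t = 1$ (the $s$-coordinate is frozen and time advances at unit rate, A4--A6), this yields
\[
 L_{y^\tau}(s) \;=\; y^+_0 + y^-_0 \;=\; h + y^-_0.
\]
It then suffices to show $L_{y^\tau}(s)\in[h,T]$. This is the main obstacle, and it will be handled by combining two degree-1 semidefinite constraints. The constraint $M_1(g_h\,y^\tau)\succeq 0$ at its scalar entry gives
\[
 (T+h)\,L_{y^\tau}(s) - L_{y^\tau}(s^2) - T h \;\geq\; 0,
\]
while the $2\times 2$ principal submatrix of $M_1(1\,y^\tau)$ indexed by $\{1,s\}$, together with $y^\tau_0=1$, gives $L_{y^\tau}(s^2)\geq L_{y^\tau}(s)^2$. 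Chaining these,
\[
 L_{y^\tau}(s)^2 - (T+h)\,L_{y^\tau}(s) + Th \;\leq\; 0,
\]
whose roots are $h$ and $T$, so $L_{y^\tau}(s)\in[h,T]$ and therefore $y^-_0 \in [0,\,T-h]$.

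Finally, for Problem~\ref{prob:risk_cvar_mom} I would add two observations: $y^\nu_0 = 1$ follows directly from \eqref{eq:risk_cvar_mom_prob}, and taking $\ell=0$ in the raise constraint \eqref{eq:risk_mean_mom_objraise} together with $y^+_0 = h$ gives an explicit linear expression for $y^{\hat{\nu}}_0$ in terms of $y^\nu_0$ and $\epsilon$, hence a uniform bound. All four (resp. six) zeroth moments are thus bounded by constants that depend only on $(h,T,\epsilon)$ and not on $k$, which proves the lemma.
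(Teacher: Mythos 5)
Your proposal is correct, and it shadows the paper's proof almost step for step: the identities $y^0_0=1$, $y^+_0=h$, $y^\tau_0=y^0_0$ from \eqref{eq:risk_mean_mom_prob}, \eqref{eq:risk_mean_mom_window} and the $\alpha=\beta=\gamma=0$ instance of \eqref{eq:risk_mean_mom_flow}, then the $\beta=1$ instance giving $L_{y^\tau}(s)=h+y^-_0$, then degree-$1$ \ac{PSD} constraints on $y^\tau$ to close the argument, and finally $\ell=0$ of \eqref{eq:risk_mean_mom_objraise} together with $y^\nu_0=1$ for the \ac{ES} moments. The one place you genuinely diverge is the finishing move for $y^-_0$: the paper tests $M_1(y^\tau)$ against the fixed vector $\bigl(\nicefrac{(1+T+h)}{2},-1,0\bigr)$ and adds the resulting inequality to the localizing inequality from $M_1(g_h y^\tau)$, obtaining the bound $y^-_0\le \nicefrac{(1+T+h)^2}{4}-Th-h$; you instead use the $2\times 2$ principal minor of $M_1(y^\tau)$ to get $L_{y^\tau}(s^2)\ge L_{y^\tau}(s)^2$ and then factor $L_{y^\tau}(s)^2-(T+h)L_{y^\tau}(s)+Th\le 0$ as $(L_{y^\tau}(s)-h)(L_{y^\tau}(s)-T)\le 0$. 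Your route recovers the exact measure-level conclusion $L_{y^\tau}(s)\in[h,T]$ of Lemma~\ref{lem:mean_mass} already at relaxation order $k=1$, and hence the sharper and more interpretable constant $y^-_0\le T-h$; the paper's linear combination is a looser but equally valid certificate. Both arguments use only $M_1$-level data, so both give bounds uniform in $k$, which is all the convergence theorem needs. One minor caveat that applies equally to both proofs: the step $L_{y^\tau}(s)=h+y^-_0$ tacitly uses $\hat{\Lie}t=1$, which holds for the continuous- and discrete-time generators considered but is not literally among A1--A4; it would be worth flagging.
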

\begin{proof}
    \begin{subequations}

This proof will first cover the mean-type case in Problem \ref{prob:risk_mean_mom}, and then will advance to the \ac{ES}-type case in Problem \ref{prob:risk_cvar_mom}.
    
    By constraints \eqref{eq:risk_mean_mom_prob} and \eqref{eq:risk_mean_mom_window}, the initial and forward moment vectors $y^0$ and $y^+$ have first element
    \begin{equation} \label{eq:+_proof}
        y_0^0 = 1 < \infty, \qquad y^+_0 = h < \infty.
    \end{equation}
     Evaluating constraint \eqref{eq:risk_mean_mom_flow} at $\alpha = \beta = \gamma = 0$ results in \begin{equation} \label{eq:tau_proof}
         y_0^\tau = y_0^0 \stackrel{\eqref{eq:+_proof}}{=} 1 < \infty.
     \end{equation}
     It remains to uniformly bound $y^-_0$. Evaluating constraint \eqref{eq:risk_mean_mom_flow} at $\alpha = \gamma = 0$, $\beta = 1$ yields 
     \begin{equation} \label{eq:liouville_proof}
         y^\tau_{1,0} = y^+_0 + y^-_0 \stackrel{\eqref{eq:+_proof}}{=} h + y^-_0.
     \end{equation}
     From constraint~\eqref{eq:risk_mean_mom_lmi}, it holds that
     \begin{equation} \label{eq:-_proof}
         0 \leq M_0(y^-) = y^-_0,
     \end{equation}
     \begin{align}
         0 \leq \ & M_1(g_h y^\tau) \notag \\
         = \ & -y^\tau_{2,0} + (T+h)y^\tau_{1,0} - Thy^\tau_0 \notag \\
        \stackrel{\eqref{eq:tau_proof}}{=} & -y^\tau_{2,0} + (T+h)y^\tau_{1,0} - Th \label{eq:loc_proof}
     \end{align}
     and 
     $$0 \preceq M_1(y^\tau) = \begin{pmatrix}
             y^\tau_{0} & y^\tau_{1,0} & y^\tau_{0,1} \\
             y^\tau_{1,0} & y^\tau_{2,0} & y^\tau_{1,1} \\
             y^\tau_{0,1} & y^\tau_{1,1} & y^\tau_{0,2}
         \end{pmatrix}$$
         so that
     \begin{align}
         0 \leq \ & \begin{pmatrix}\nicefrac{(1+T+h)}{2} & -1 & 0\end{pmatrix} M_1(y^\tau) \begin{pmatrix}\nicefrac{(1+T+h)}{2} \\ -1 \\ 0\end{pmatrix} \notag \\
         = \ & y^\tau_0 \nicefrac{(1+T+h)^2}{4} - (1+T+h)y^\tau_{1,0} + y^\tau_{2,0} \notag \\
         \stackrel{\eqref{eq:tau_proof}}{=} & \nicefrac{(1+T+h)^2}{4} - (1+T+h)y^\tau_{1,0} + y^\tau_{2,0}. \label{eq:mom_proof}
     \end{align}
     Summing~\eqref{eq:loc_proof} and \eqref{eq:mom_proof} then yields
     $$0 \leq \nicefrac{(1+T+h)^2}{4} - Th -y^\tau_{1,0}$$
     which we reinject in~\eqref{eq:liouville_proof} to obtain, after rearrangement:
     \begin{equation} \label{eq:end_proof}
         0 \stackrel{\eqref{eq:-_proof}}{\leq} y^-_0 \leq \nicefrac{(1+T+h)^2}{4} - Th - h
     \end{equation}
     Since the right hand side of \eqref{eq:end_proof} is indeed nonnegative due to the AM-GM inequality applied to $h$ and $T+1$, the proof is concluded for the mean case.

     Constraint \eqref{eq:risk_cvar_mom_prob}  relates at $\ell=0$ that $y^0_0 = \epsilon y^\nu_0 + y^{\hat{\nu}}_0$. Given that $y^\nu_0=1$ (constraint \eqref{eq:risk_mean_mom_prob}) and $y^0_0=1$ (constraint \eqref{eq:risk_cvar_mom_prob}), it holds that $y^{\hat{\nu}}_0 = 1-\epsilon$. Bounded first elements are now proven for \ac{ES} case as well.
    \end{subequations}
\end{proof}

We are now ready to prove Theorem \ref{thm:convergence_mean}. Convergence of $\lim_{k\rightarrow \infty} p^*_k = p^*$ and $\lim_{k\rightarrow \infty} p^*_{ES, k} = p^*_{ES}$ is assured by Corollary 8 of \cite{tacchi2022convergence}, given that all problem data is polynomial under an Archimedean requirement (A5-A6), all solutions are uniformly bounded  (Lemma \ref{lem:mean_mass_mom}), and the objective $p^*$ is finite (upper-bounded by the finite $\max_{x \in X} p(x)$ by compactness [A1] and polynomial structure [A5-A6]). 

\end{document}